\newtheorem{theorem}{Theorem}
\newtheorem{lemma}{Lemma}
\newtheorem{mydef}{Definition}
\newtheorem{corollary}{Corollary}
\newtheorem{remark}{Remark}
\newcommand*{\TitleFont}{%
      \usefont{\encodingdefault}{\rmdefault}{b}{n}%
      \fontsize{18}{26}%
      \selectfont}
\begin{document}
\title{\TitleFont Optimal Energy Procurement from a Strategic Seller with Private Renewable and Conventional Generation}   
\author{Hamidreza Tavafoghi and Demosthenis Teneketzis\\Department of Electrical Engineering and Computer Science\\University of Michigan, Ann Arbor, Michigan, 48109-2122\\Email: \{tavaf,teneket\}$@$umich.edu}         

\date{}    
\maketitle
\vspace{-5pt}
\section*{Abstract}
We consider a mechanism design problem for energy procurement, when there is one buyer and one seller, and the buyer is the mechanism designer.
The seller can generate energy from conventional (deterministic) and renewable (random) plants, and has multi-dimensional private information which determines her production cost. The objective is to maximize the buyer's utility under the constraint that the seller voluntarily participates in the energy procurement process. We show that the optimal mechanism is a menu of contracts (nonlinear pricing) that the buyer offers to the seller, and the seller chooses one based on her private information. 
	
\vspace{5pt}

Keywords: integration of renewable energy, optimal contract, energy procurement, multi-dimensional private information, mechanism design.
\section{Introduction}

The intermittent nature of modern renewable energy resources makes the integration of modern renewable energy generation into the current designed infrastructure for conventional energy generation a challenging problem. Electricity generation from modern renewable energy resources is not predetermined and cannot be treated as conventional generation. Energy generation from wind, solar, and other modern renewable energy resources depends on the weather and is stochastic. This feature results in many technical issues in reliability and stability in generation and transmission, as well as market structure.

Currently, modern renewable energy generators that participate in the regular real-time power market, that is originally designed for conventional generators, are treated as negative loads on the grid, and receive subsidy for each kWh they deliver \cite{east,west}. Due to the increasing share of wind power in energy markets, independent system operators (ISOs) gradually require wind power plants to participate in the day-ahead market, to commit to a fixed amount of generation, and to pay a penalty for each $KWh$ they fail to provide \cite{fink11}.

Along with the current approach, the proposed structure of the smart grid creates additional opportunities to integrate conventional and modern renewable energy resources via flexible loads connected to the grid.

The challenges that arise in the market aspect of the integration of renewable and conventional energy resources motivate the problem formulated and studied in this paper. We follow the current approach for integration of conventional and modern renewable energy resources. This approach requires modern renewable generators to behave as firm power plants that participate in the day-ahead market (in general, a forward market) and commit to producing a predetermined amount of electricity in advance. We want to incorporate new features into our problem formulation, which are mainly motivated by the following observations: (1) Currently, modern renewable generators are paid at a fixed rate, receive subsidy, and do not take any risk, therefore, no strategic behavior is considered. However, as the share of modern renewable generation increases and supportive programs for modern renewable energy decreases, the market becomes more competitive and participants behave strategically. (2) The cost of energy generation varies from 60 to 250 $(\$/MWh)$ due to the different technologies that are available at wind power plants \cite{windcost}. These technologies are the plants' private information. Because modern renewable generators have private production cost and  behave strategically, the current market structure, where renewable generators are paid at a fixed rate, receive subsidies, and all the produced electricity is guaranteed to be procured at the real-time market with no risk, is inefficient.

In this paper we consider a model for integration of conventional and modern renewable energy sources with the following features: (F1) Energy market participants are strategic. (F2) The energy seller has the ability to produce both modern renewable and conventional energy. (F3) Energy producers may have different technologies for renewable and conventional energy generation which are their own private information.

By considering a seller with generation capabilities from both conventional and modern renewable energy resources, we postulate a possible future scenario in which the integration of modern renewable and conventional generation is partly done by sellers and the ISO is not fully responsible for the integration. Our model captures the current approach to the integration of conventional and modern renewable energy resources, that considers sellers with only modern renewable generation capability, if we set the cost of conventional generation to be fixed and equal to the penalty rate for each $KWh$ producers fail to provide.
\vspace{-8pt}
\subsection{Literature Review}
Research on resource allocation and resource management with uncertainty in energy markets has addressed two types of problems:
(1) Those which follow the smart grid approach and focus on the demand side with strategic (\text{e.g.} \cite{fahrioglu}) or non-strategic agents (\textit{e.g.} \cite{lina}). 
(2) Those where the focus is on the supply side and energy providers face uncertainties in their production.

The problem formulated in this paper belongs to the second class where energy providers are strategic and have private information about their own cost and technology. Research on this class of problems has appeared in \cite{chao},\cite{randomselling},\cite{bitar}, and \cite{jain}. The work in \cite{chao} considers a given uncertain demand and investigates a multi-dimensional auction mechanism for the forward reserved market assuming that the participants have no market power and the equilibrium price is not affected by each individual participant's behavior. 
The idea of selling uncertain (random) power to consumers is investigated in \cite{interruppower} and \cite{randomselling}. The problems studied in \cite{bitar} and \cite{jain} consider modern renewable generation and are the most closely related works to the problem we consider here. The work  in \cite{bitar} considers a modern renewable generator with stochastic generation and determines the optimal bidding policy for it in the day-ahead market. The authors of \cite{bitar} assume that price is given and fixed, so there is no role for a buyer, and there is a penalty for production deficiency and over production. The work in \cite{jain} considers a problem where an ISO wants to procure energy from modern renewable generators with the assumption that generation from renewable energy resources is free and each generator has private information about the probability density function of its generation; a VCG-based mechanism is proposed for the optimal energy procurement. The proposed VCG-based mechanism is suboptimal for the problem formulated in \cite{jain} when the probability distribution for the generation cannot be parameterized by only a one-dimensional variable (see \cite{krishna}, chapter 14).

From the economics point of view, the problem we formulate in this paper belongs to the class of screening problems. In economics, the one-dimensional screening problem has been well-studied with both linear and nonlinear utility functions \cite{tilman}. However, the extension to the multi-dimensional screening problem is not straightforward and there is no general solution to it. The authors in \cite{multiscreening} study a general framework for a static multi-dimensional screening problem with linear utility. They discuss two general approaches, the parametric-utility approach and the demand-profile approach. The methodology we use to solve the problem formulated in this paper is similar to the demand-profile approach. We consider a multi-dimensional screening problem with nonlinear utilities. The presence of nonlinearities results in additional complications which are not present in \cite{multiscreening} where the utilities are linear\footnote{When a problem is linear, expectation of any random variable can be replaced by its expected value and reduce the problem to a deterministic one.}. 
\vspace{-7pt}
\subsection{Contribution}
The contribution of this paper is two-fold. First, we introduce a model that captures key features of the current approach for the integration of modern renewable energy sources into the grid, and postulate a possible broader role for sellers in the future. We consider a strategic seller that has the capability to produce energy from modern renewable and conventional resources. To our knowledge, this is the first model that considers simultaneously both types of electricity generation. As we discuss below, considering both modern renewable and conventional generation with a general production cost results in a multi-dimensional mechanism design problem which is conceptually different from the one-dimensional screening problem that arises when there is only one type of energy resource with simple production cost. The model proposed in this paper captures the scenarios investigated in \cite{jain} and \cite{bitar} as special cases; in \cite{bitar} and \cite{jain} the seller owns only modern renewable generators with free generation and is penalized for production mismatch. 

Second, we determine the optimal mechanism for energy procurement from a strategic seller with multi-dimensional private information satisfying both interim and ex-post voluntary participation of the seller. Because of the random nature of renewable energy generation, interim voluntary participation of the seller does not necessarily imply ex-post voluntary participation. To our knowledge, our results present the first optimal mechanism for a strategic seller with conventional and renewable generation, and multi-dimensional private information which also guarantees the seller's ex-post voluntary participation. We show that the current linear pricing for modern renewable energy generation is not efficient and that the optimal pricing method is a nonlinear scheme. 

\subsection{Organization}
The paper is organized as follows: In Section 2, we introduce the  model and formulate the energy procurement problem. In section 3, we present an outline of our approach and the key ideas toward the solution of the problem formulated in section 2, and state the main result on the optimal mechanism for energy procurement. We illustrate the result by an example in section 4. We discuss the nature of our results in section 5. We extend our results to the energy procurement problem without full commitment for the seller, and propose an optimal contract with arbitrary risk allocation between the buyer and the seller in section 6. We conclude in section 7.

\section{Model Specification and Problem Formulation}
\subsection{Model Specification}
A buyer wishes to make an agreement to buy energy from an energy seller\footnote{From now on, we refer to the buyer as ``he'' and to the seller as ``she''.}.
The seller has the ability to produce energy from conventional (deterministic) generators or from renewable (random) generators that she owns.

Let $q$ be the amount of energy the buyer buys, and $t$ be his payment to the seller. We proceed to formulate the energy procurement problem by making the following assumptions.
\vspace{5pt}

\textbf{(A1)}
The buyer is risk-neutral and his total utility is given by $\mathcal{V}(q)-t$, where $\mathcal{V}(q)$ is the utility that he gets from receiving $q$ amount of energy, and $\mathcal{V}(0)=0$. $\mathcal{V}(\cdot)$ is the buyer's private information\footnote{We assume that the buyer either has an elastic demand, or needs to meet some fixed demand and has an outside option to buy energy if he cannot buy it from the seller (which is the result of his interaction with other players in the market).}.
\vspace{5pt}

\textbf{(A2)} The seller's production cost is given by $C(q,w,x)$, where $x\hspace{-2pt}\in\hspace{-2pt}\chi\hspace{-2pt}\subseteq\hspace{-2pt}\mathbb{R}^n$ is the seller's type (technology and cost) and $w$ denotes the realization of a random variable $W$, \textit{e.g.}  weather. $C(q,w,x)$ is convex and increasing in $q$. The start-up cost $C(0,w,x)$ does not depend on the weather $w$ and is given by $x_1$, \textit{i.e.} $C(0,w,x)\hspace{-2pt}=\hspace{-2pt}C(0,x)\hspace{-2pt}=\hspace{-2pt}x_1$.

\vspace{5pt}

\textbf{(A3)} The probability distribution function of $W$, \textit{i.e.} weather forecast, is common knowledge between the buyer and the seller and is given by $F_{W}(w)$. 

\textbf{(A4)}
The seller is risk-neutral and her utility is given by her total expected revenue
\begin{equation}
\mathbb{E}_W\left\{t-C(q,W,x)\right\}.
\end{equation}

\vspace{5pt}

\textbf{(A5)}
Define $c(q,x)\hspace{-2pt}=\hspace{-2pt}\frac{\partial \mathbb{E}_W\left\{C(q,W,x)\right\}}{\partial q}$ as the expected marginal cost for the seller's type $x$. Without loss of generality, there exists $m$, $1\hspace{-2pt}<\hspace{-2pt} \hspace{-2pt}m\hspace{-2pt}\leq n$, such that $c(q,x)$ is increasing in $x_i$ for $1\hspace{-2pt}\leq\hspace{-2pt} i\hspace{-2pt}\leq\hspace{-2pt} m$, and decreasing in $x_i$ for $m\hspace{-2pt}<\hspace{-2pt}i\hspace{-2pt}\leq\hspace{-2pt} n$. Moreover, there is an $\underline{x}\hspace{-2pt}\in\hspace{-2pt}\chi$ (the seller's worst type) such that $\underline{x}_i\hspace{-2pt}\leq\hspace{-2pt} x_i$ and $\underline{x_j}\hspace{-2pt}\geq\hspace{-2pt} x_j$ for all $x\hspace{-2pt}\in\hspace{-2pt}\chi$, $1\hspace{-1pt}\leq \hspace{-2pt}i\hspace{-2pt}\leq\hspace{-2pt} m$ and $m\hspace{-2pt}<\hspace{-2pt}j\hspace{-2pt}\leq \hspace{-2pt}n$. 
\vspace{5pt}

\textbf{(A6)} The seller's type $x$ is her own private information, the set $\chi$ is common knowledge, and there is a prior probability distribution $f_X$ over $\chi$ which is common knowledge between the buyer and the seller.
\vspace{5pt}

\textbf{(A7)}
Both the buyer and the seller are strategic and perfectly rational, and this is common knowledge.
\vspace{5pt}

\textbf{(A8)}
The buyer has all the bargaining power; thus, he can design the mechanism/set of rules that determines the agreement for energy procurement quantity $q$, and payment $t$\footnote{The buyer is either an ISO or a representative agent for aggregate demand. In the first case, it is realistic to assume that the ISO has all the bargaining power since he is the designer and the regulator of the energy market. In the later case, usually there is no competition on the demand side, but different sellers compete to win a contract with the demand side. Therefore, it is reasonable to assume that in a non-cooperative setting, the demand side has all the bargaining power.}.

\vspace{5pt}

\textbf{(A9)}
After the buyer announces the mechanism for energy procurement and the seller accepts it, both the buyer and the seller are fully committed to following the rules of the mechanism.

As a consequence of assumption (A8) on the buyer's bargaining power and the fact that the seller's utility does not directly depend on the buyer's private information, the solution of the problem formulated in this paper does not depend on whether the buyer's utility $\mathcal{V}(\cdot)$ is private information or common knowledge\footnote{This becomes more clear by looking at the main result given by Theorem 1.}.

Note that in the one-dimensional screening problem, the cost of production induces a complete order among the seller's types, which is crucial to the solution to the optimal mechanism design problem. However, in multi-dimensional screening problems, the expected cost of production induces, in general, only a partial order among the seller's types.
\begin{mydef}
We say the seller's type $x$ is better (resp. worse) than the seller's type $\hat{x}$ if $\mathbb{E}_W\left\{C(q,W,x)\right\}\hspace{-2pt}\leq\hspace{-2pt} \mathbb{E}_W\left\{C(q,W,\hat{x})\right\}$ for all $q\hspace{-2pt}\geq\hspace{-2pt}0$ (resp. $\mathbb{E}_W\left\{C(q,W,x)\right\}\hspace{-2pt}\geq\hspace{-2pt} \mathbb{E}_W\left\{C(q,W,\hat{x})\right\}$) with strict inequality for some $q$.
\end{mydef}

From (A5), the seller's type $x$ is better than the seller's type $\hat{x}$ if and only if $x_i\hspace{-2pt}\leq \hspace{-2pt}\hat{x}_i$ for $1\hspace{-2pt}\leq\hspace{-2pt} i\hspace{-2pt}\leq\hspace{-2pt} m$, and $x_i\hspace{-2pt}\geq\hspace{-2pt} \hat{x}_i$ for $m\hspace{-2pt}<\hspace{-2pt} i\hspace{-2pt}\leq\hspace{-2pt} n$ with strict inequality for some $i$. The following example illustrates assumption (A2)-(A5) and Definition 1.

\textbf{A simple case.} Consider a seller with a wind turbine and a gas generator. The generation from the wind turbine is free and given by $\gamma w^3$, where $\gamma$ is the turbine's technology and $w$ is the realized weather. The gas generator has a fixed marginal cost $\theta_c$. We assume that there is a fixed cost $c_0$ which includes the start-up cost for both plants and the capital cost for the seller. Therefore, the seller's type has $n=3$ dimensions. The generation cost for the seller is given by
\begin{eqnarray}
C(q,w,x)=c_0+\theta_c\max\left\{q-\gamma w^3,0\right\}
\end{eqnarray}
The seller's type $x=(c_0,\theta_c,\gamma)$ is better than the seller's type $\hat{x}=(\hat{c}_0,\hat{\theta}_c,\hat{\gamma})$ if and only if $c_0\leq\hat{c}_0$, $\theta_c\leq \hat{\theta}_c$, and $\gamma\geq \hat{\gamma}$ with one of the above inequalities being strict.

\vspace{-3pt}
\subsection{Problem Formulation}
Let $(\mathcal{M},h)$ be the mechanism/game form (see \cite{mascolell}, Ch. 23) for energy procurement designed by the buyer. In this game form, $\mathcal{M}$ describes the message/strategy space and $h$ determines the outcome function; $h:\mathcal{M}\rightarrow\mathbf{R}_+\times\mathbf{R}$ is such that for every message/action $m\in\mathcal{M}$ it specifies the amount $q$ of procured energy and the payment $t$ made to the seller, \text{i.e.} $h(m)=(q(m),t(m))=(q,t)$.

The objective is to determine a mechanism $(\mathcal{M},h)$ so as to
\begin{equation}
\underset{\left(\mathcal{M},(q,t)\right)}{\textnormal{maximize}}\quad \mathbb{E}_{X,W}\left\{\mathcal{V}(q)-t\right\}
\label{P1}
\end{equation}
under assumptions (A1)-(A9) and the constraint that the seller is willing to voluntarily participate in the energy procurement process. The willingness of the seller to voluntarily participate in the mechanism for energy procurement is called voluntary participation (VP) (or individual rationality) and is written as
\begin{eqnarray}
\text{VP:}\quad t(m^*)-\mathbb{E}_W\left\{C(q(m^*),W,x)\right\}\geq0,\quad\forall x\in \chi
\end{eqnarray}
where $m^*\in\mathcal{M}$ is a Bayesian Nash equilibrium (BNE) of the game induced by the mechanism $(\mathcal{M},h)$. That is, at equilibrium the seller has a non-negative payoff.

We call the above problem \textbf{(P1)}.

\section{Outline of the Approach \& Results}

We prove that the optimal energy procurement mechanism is a pricing scheme that the buyer offers to the seller and the seller chooses a production quantity based on her type. We characterize the optimal energy procurement mechanism by the following theorem, which reduces the original functional maximization problem (P1) to a set of equivalent point-wise maximization problems. 

\begin{theorem}
The optimal mechanism $(q,t)$ for the buyer is a menu of contracts (nonlinear pricing) given by
\begin{eqnarray}
\hspace{-20pt}&p(q)=&\textnormal{arg}\max_{\hat{p}}\left\{P\left[x\in\chi|
{\hat{p}}\geq c(q,x)\right]\left(\mathcal{V}'(q)-\hat{p}\right)\right\}\label{opt-p}\hspace{-3pt},\\
\hspace{-20pt}&t(q)=&\int_{0}^q{p(l)dl}+C(0,\underline{x}),\label{optt-gen}\\
\hspace{-20pt}&q(x)=&\textnormal{arg}\max_{l\in\mathbf{R}_+}\mathbb{E}\left\{t\left(l\right)-\mathbb{E}_W\left\{C(l,x,w)\right\}\right\}
\end{eqnarray}
where $\mathcal{V}'(q):=\frac{d\mathcal{V}(q)}{dq}$.
\end{theorem}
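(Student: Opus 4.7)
The plan is to follow the demand-profile / taxation-principle route: reduce every mechanism to a nonlinear tariff $t(\cdot)$ that the seller faces, rewrite the buyer's expected utility as a single integral of pointwise contributions indexed by the quantity level, and then optimize pointwise in the marginal price $p(q):=t'(q)$. By the revelation principle I restrict to direct mechanisms $(q(x),t(x))$ in which truthful reporting is a BNE; incentive compatibility then forces any two types assigned the same quantity to receive the same payment, so the mechanism is outcome-equivalent to posting a tariff $t:\mathbf{R}_+\to\mathbf{R}$ and letting each type self-select her preferred quantity. This immediately gives the third displayed equation of the theorem as the seller's best response to the posted $t$.

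Next, since $\mathbb{E}_W\{C(\cdot,W,x)\}$ is convex and increasing in $q$ by (A2), the seller's first-order condition reads $p(q(x))=c(q(x),x)$; equivalently, at a fixed quantity $q$ the seller is willing to produce at least $q$ iff $c(q,x)\leq p(q)$, so the $X$-probability that her production reaches $q$ equals $P[\,x\in\chi: c(q,x)\leq p(q)\,]$. Writing $\mathcal{V}(q)=\int_0^q \mathcal{V}'(l)\,dl$ and $t(q)=t(0)+\int_0^q p(l)\,dl$, substituting into $\mathbb{E}_X[\mathcal{V}(q(X))-t(q(X))]$, and swapping the order of integration by Fubini, the buyer's objective becomes
\begin{equation*}
\int_0^{\infty} P\bigl[x\in\chi: c(l,x)\leq p(l)\bigr]\bigl(\mathcal{V}'(l)-p(l)\bigr)\,dl \;-\; t(0),
\end{equation*}
in which $p(\cdot)$ enters only pointwise. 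Maximizing the integrand in $\hat{p}=p(l)$ at each $l$ yields exactly the displayed formula for $p(q)$, and this is the heart of the argument: it converts a functional multi-dimensional screening problem into a family of scalar problems, bypassing the need to write down the full set of IC constraints. The integration constant is then fixed by VP: since $C(0,W,x)=x_1$ is deterministic by (A2), interim and ex-post VP coincide at $q=0$, so VP for every type requires $t(0)\geq C(0,\underline{x})$; the buyer's objective is strictly decreasing in $t(0)$, so the optimum binds with $t(0)=C(0,\underline{x})$, giving the formula for $t(q)$. Ex-post VP at every other quantity then follows from
\begin{equation*}
t(q(x))-\mathbb{E}_W[C(q(x),W,x)] \;=\; \bigl(t(0)-C(0,x)\bigr)+\int_0^{q(x)}\bigl(p(l)-c(l,x)\bigr)\,dl \;\geq\; 0,
\end{equation*}
using the definition of $\underline{x}$ in (A5) to sign the bracket and the seller's optimality condition to sign the integrand.

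The main obstacle is justifying that the pointwise maximizer $p(\cdot)$ obtained in this way is a bona fide tariff marginal price, that is, that it is weakly monotone in $q$ so that $t(q)=C(0,\underline{x})+\int_0^{q}p(l)\,dl$ is convex and the FOC characterization of the seller's choice used above is globally valid rather than merely local. Monotonicity should follow from (A5) together with convexity of $c(\cdot,x)$ in $q$: the ``demand'' $P[c(q,X)\leq \hat{p}]$ shifts downward as $q$ grows, so the pointwise optimum in $\hat p$ is pushed upward; I also need to verify that truth-telling is globally (not just locally) optimal in the resulting direct-mechanism representation. This is where the multi-dimensional nature of $x$ genuinely enters the argument and where most of the technical effort will lie; everything else reduces to Fubini, an envelope-style integration identity, and a binding-constraint argument for the worst type $\underline{x}$.
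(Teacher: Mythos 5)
Your proposal is correct and its skeleton coincides with the paper's: revelation principle, then the taxation-principle reduction to a posted tariff (the paper's Lemma 2), then rewriting the buyer's expected utility as $\int_0^\infty P[x\in\chi\,|\,p(l)\geq c(l,x)](\mathcal{V}'(l)-p(l))\,dl - t(0)$ and maximizing pointwise in $\hat p$ (the paper's Steps 5 and 7, obtained there by integrating by parts against the c.d.f.\ of $q^*$ rather than by your Fubini swap --- the same computation). Where you genuinely diverge is in the treatment of voluntary participation. The paper establishes two monotonicity lemmas via the envelope theorem and an IC-differencing argument (Lemma 1: $U$ is monotone in the type, so VP binds only at $\underline{x}$; Lemma 3: $q^*$ is monotone in the type, so $P[\cdot]=1$ on $[0,q^*(\underline{x})]$), and then observes that $t(0)+\int_0^{q^*(\underline{x})}p(l)\,dl$ enters both the objective and the worst-type constraint, so the normalization $t(0)=C(0,\underline{x})$ is without loss. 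You instead verify VP for \emph{every} type in one line from the identity $U(x)=\bigl(t(0)-C(0,x)\bigr)+\int_0^{q(x)}\bigl(p(l)-c(l,x)\bigr)\,dl$, signing the bracket by $C(0,\underline{x})\geq C(0,x)$ and the integral by the seller's optimality of $q(x)$ over $q=0$. This is more elementary (no envelope theorem, no Lemma 1 or Lemma 3 needed for sufficiency) and has the side benefit of making Section 6's ex-post VP modification transparent. Two caveats: your claim that VP \emph{requires} $t(0)\geq C(0,\underline{x})$ is not true for an arbitrary tariff (a type can cover a deficit in $t(0)$ out of a strictly positive production surplus $\int_0^{q(x)}(p-c)$); it becomes true only after one shows, as the paper does, that the pointwise optimum satisfies $p(l)=c(l,\underline{x})$ for $l\leq q^*(\underline{x})$, which kills the worst type's surplus and makes the constraint bind exactly at $t(0)=C(0,\underline{x})$. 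You should add that half-page. Second, the quasi-concavity/monotonicity issue you flag at the end (global validity of the FOC characterization $P(q^*\geq l)=P[p(l)\geq c(l,x)]$) is real, but the paper does not resolve it either --- it is assumed in a footnote and left to be ``checked posterior'' --- so your proposal meets the paper's own standard of rigor there.
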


The proof of theorem 1 proceeds is several steps. Below we present these steps and the key ideas behind each step. The detailed proof of all results (theorems and lemmas) appearing below can be found in the appendix.

\vspace{5pt}

\textbf{Step 1.} By invoking the revelation principle \cite{dasgupta-revelation}, we restrict attention, without loss of optimality, to direct revelation mechanisms that are incentive compatible (defined below) and individually rational. 

\begin{mydef}
A direct revelation mechanism is defined by functions $q:\chi\rightarrow\mathbf{R}_+$ and $t:\chi\rightarrow\mathbf{R}_+$,
and works as follows:
First, the buyer announces functions $q$ and $t$. Second, the seller declares some $x'\in\chi$ as her report for her technology and cost. Third, the seller is paid $t(x')$ to deliver $q(x')$ amount of energy.
\end{mydef}

The seller is strategic and may lie and misreport her private information $x$, \textit{i.e.} we do not necessarily have $x'=x$ unless it is to the seller's interest to report truthfully. We call incentive compatibility (IC) the requirement for truthful reporting, and define the following problem that is equivalent to (P1).
\vspace{5pt}

\textbf{Problem P2:} Determine functions $q:\chi\rightarrow\mathbf{R}_+$ and $t:\chi\rightarrow\mathbf{R}_+$ so as to
\vspace{-5pt}
\begin{eqnarray}
&&{\underset{(q,t)}{\textnormal{maximize}}}\quad \mathbb{E}_{x,W}\left\{\mathcal{V}(q(x))-t(x)\right\}\\
&&\textit{subject to}\nonumber\\&&\hspace{-25pt}IC: x\hspace{-2pt}=\hspace{-2pt}\textnormal{arg}\max_{x'}\mathbb{E}_W\hspace{-2pt}\left[t(x')\hspace{-1pt}-\hspace{-1pt}C(q(x'),w,x)\right],\forall x\hspace{-2pt}\in\hspace{-2pt}\chi\\
&&\hspace{-25pt}VP: \mathbb{E}_W\left[t(x)-C(q(x),W,x)\right]\geq0,\;\forall x\hspace{-2pt}\in\hspace{-2pt}\chi.
\end{eqnarray}

\vspace{5pt}

\textbf{Step 2.} We utilize the partial order among the seller's different types to rank the seller's utility for her different types, and reduce the VP constraint for all the seller's types to the VP constraint only for the seller's worst type. 

\begin{lemma}
For a given mechanism $(q,t)$, a better type of the seller gets a higher utility. That is, let $U(x):=\mathbb{E}_W\left\{t(q(x))-C(q(x),W,x)\right\}$ denote the 
expected profit of the seller with type $x$. Then,
\begin{enumerate}
\item $\frac{\partial U}{\partial x_i}\leq 0, 1\leq i\leq m$,
\item $\frac{\partial U}{\partial x_i}\geq 0, m< i\leq n$.
\end{enumerate}
\label{ut-order}
\end{lemma}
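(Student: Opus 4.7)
The plan is to exploit the incentive compatibility constraint to express $U$ as the upper envelope of a family of functions parameterized by the report, then read off the coordinate-wise monotonicity in $x$ using assumption (A5). The cleanest path first obtains the sign of the expected-cost derivative from (A2) and (A5), and then combines it with an envelope identity for $U$.

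First, I would rewrite IC as the statement that, for each $x$, truthful reporting solves
\[
U(x) \;=\; \max_{x' \in \chi} \mathbb{E}_W\!\left[t(x') - C(q(x'), W, x)\right],
\]
with the maximizer attained at $x' = x$. This is the standard variational reformulation of IC and puts $U$ in a form to which the envelope theorem applies: the only dependence of the maximand on the parameter $x$ (as opposed to the decision variable $x'$) sits in the expected cost term.

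Second, I would sign $\partial \mathbb{E}_W[C(q, W, x)] / \partial x_i$ with $q$ held fixed. Since $C(0, W, x) = x_1$ by (A2), integrating $c(l, x) = \partial_q \mathbb{E}_W[C(l, W, x)]$ gives
\[
\mathbb{E}_W[C(q, W, x)] \;=\; x_1 + \int_0^q c(l, x)\, dl,
\]
so
\[
\frac{\partial \mathbb{E}_W[C(q, W, x)]}{\partial x_i} \;=\; \frac{\partial x_1}{\partial x_i} + \int_0^q \frac{\partial c(l, x)}{\partial x_i}\, dl.
\]
For $1 \leq i \leq m$ both terms are $\geq 0$ (the boundary term equals $1$ if $i=1$ and $0$ otherwise, and the integrand is nonnegative since $c$ is increasing in $x_i$ by (A5)); for $m < i \leq n$ the boundary term vanishes and the integrand is $\leq 0$.

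Third, I would invoke the envelope theorem on the representation in step one. At any $x$ where $U$ is differentiable,
\[
\frac{\partial U(x)}{\partial x_i} \;=\; -\frac{\partial}{\partial x_i}\mathbb{E}_W\!\left[C(q(x'), W, x)\right]\bigg|_{x' = x},
\]
i.e., the derivative with $q(x')$ held fixed at $q(x)$. Substituting the sign computation from step two yields $\partial U/\partial x_i \leq 0$ for $1 \leq i \leq m$ and $\partial U/\partial x_i \geq 0$ for $m < i \leq n$, as claimed.

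The main obstacle I expect is the differentiability hypothesis: the IC-optimal report need not be smooth in $x$, so $U$ may fail to be differentiable on a small set, and one must justify the envelope step. A robust fallback avoids derivatives entirely by a pairwise comparison: for $\hat{x}$ and $x$ differing only in coordinate $i$ with $\hat{x}_i \leq x_i$ (where $1 \leq i \leq m$, so $\hat{x}$ is better along that coordinate by Definition 1), IC yields
\[
U(\hat{x}) \;\geq\; \mathbb{E}_W[t(x) - C(q(x), W, \hat{x})] \;=\; U(x) + \bigl(\mathbb{E}_W[C(q(x), W, x)] - \mathbb{E}_W[C(q(x), W, \hat{x})]\bigr),
\]
and the bracketed term is $\geq 0$ by step two (integrated along the segment joining $\hat{x}_i$ and $x_i$). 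The analogous inequality for $m < i \leq n$ is obtained by reversing the direction. This establishes coordinate-wise monotonicity, which is exactly the finite-difference form of the claim and specializes to the stated sign conditions on $\partial U/\partial x_i$ wherever those partials exist.
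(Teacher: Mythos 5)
Your proposal is correct and follows essentially the same route as the paper: rewrite $U$ via IC as a maximum over reports, apply the envelope theorem, and sign the resulting cost derivative using (A5). The only additions are welcome bits of rigor the paper leaves implicit --- the explicit computation $\mathbb{E}_W[C(q,W,x)] = x_1 + \int_0^q c(l,x)\,dl$ to justify the sign of $\partial \mathbb{E}_W[C]/\partial x_i$, and the derivative-free pairwise IC comparison as a fallback where $U$ is not differentiable.
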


A direct consequence of Lemma \ref{ut-order}, is that the seller's worst type $\underline{x}$ receives the minimum utility among all the seller's types. 

\begin{corollary}
The voluntary participation constraint is only binding for the worst type $\underline{x}$, that is the general VP constraint (7) can be reduced to
\begin{eqnarray}
U(\underline{x}):=\mathbb{E}\left\{t(q(\underline{x}))-C(q(\underline{x}),w,\underline{x})\right\}=0.
\end{eqnarray}
\end{corollary}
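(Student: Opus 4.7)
The plan is to combine Lemma~\ref{ut-order} with a standard constant-shift argument from contract theory.

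First, I would use Lemma~\ref{ut-order} to collapse the continuum of VP constraints into a single scalar one. The lemma states that $U$ is non-increasing in $x_i$ for $1\le i\le m$ and non-decreasing in $x_i$ for $m<i\le n$. Combined with the coordinatewise extremality of $\underline{x}$ stipulated by (A5), this forces $U(\underline{x})\le U(x)$ for every $x\in\chi$, so the worst type attains the pointwise minimum of $U$ over $\chi$. Hence the family of constraints $\{U(x)\ge 0:x\in\chi\}$ is equivalent to the single inequality $U(\underline{x})\ge 0$.

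Next, I would upgrade this inequality to equality at any optimum of (P2) by a standard perturbation argument. Suppose, toward contradiction, that some IC/VP mechanism $(q,t)$ achieves the optimal value while $U(\underline{x})=\delta>0$. Define the perturbed mechanism $(q,\tilde t)$ with the same allocation and shifted payments $\tilde t(x):=t(x)-\delta$. Because the IC inequalities
\[
t(x)-\mathbb{E}_W[C(q(x),W,x)]\ge t(x')-\mathbb{E}_W[C(q(x'),W,x)]\quad\forall\, x,x'\in\chi
\]
are invariant under a uniform additive shift of $t$, IC is preserved. VP is also preserved, since $\tilde U(x)=U(x)-\delta\ge U(\underline{x})-\delta=0$ by the previous paragraph. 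Yet the buyer's objective $\mathbb{E}_{X,W}[\mathcal{V}(q(x))-\tilde t(x)]$ strictly exceeds the original value by $\delta>0$, contradicting optimality. Therefore every optimum of (P2) satisfies $U(\underline{x})=0$.

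I do not expect a substantive obstacle here: once Lemma~\ref{ut-order} is in hand, the reduction is essentially two short observations, one monotonicity argument and one envelope-free perturbation. The only mildly delicate point is verifying that the uniform downward shift does not violate any sign restriction on $t$; a short check using IC against the report $\underline{x}$ shows $t(x)\ge\delta$ for all $x$, so $\tilde t\ge 0$ is automatically maintained and no additional complication arises.
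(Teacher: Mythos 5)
Your proof is correct and follows essentially the same route as the paper: Lemma~1 together with the coordinatewise extremality of $\underline{x}$ from (A5) collapses the family of VP constraints to the single inequality $U(\underline{x})\ge 0$, and a uniform downward shift of payments shows this constraint binds at any optimum. The paper states the reduction as an immediate consequence of Lemma~1 and handles the bindingness only implicitly in the proof of Corollary~2 (where the same slack-removal idea is applied to the term $t(0)+\int_0^{q^*(\underline{x})}p(l)\,dl$), so your write-up simply makes explicit what the paper leaves implicit.
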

 
\vspace{5pt}

\textbf{Step 3.} We show, via Lemma 2 below, that without loss of optimality, we can restrict attention to a set of functions $t(\cdot)$ that depend only on the amount of energy $q$. That is, the optimal mechanism is a pricing scheme.

\begin{lemma}
For any pair of functions $(q,t)$ that satisfies the IC constraint, we can rewrite $t(x')$ as $t\left(q(x')\right)$.\label{pricing}
\end{lemma}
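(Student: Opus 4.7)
The plan is to prove Lemma 2 by showing that whenever two different types $x_1, x_2 \in \chi$ are assigned the same production quantity by $q(\cdot)$, they must also receive the same payment under $t(\cdot)$. Once this is established, the payment depends on the reported type only through the induced production quantity, and so we may legitimately define $\tilde{t}(q) := t(x)$ for any $x$ with $q(x) = q$, obtaining a well-defined function on the image of $q(\cdot)$ (and extended arbitrarily off the image, since those quantities are never selected in equilibrium).

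First, I would fix $x_1, x_2 \in \chi$ with $q(x_1) = q(x_2) =: q_0$ and write down the IC constraint for each. Applied to a seller of true type $x_1$, IC implies
\begin{equation*}
\mathbb{E}_W\bigl[t(x_1) - C(q(x_1),W,x_1)\bigr] \geq \mathbb{E}_W\bigl[t(x_2) - C(q(x_2),W,x_1)\bigr].
\end{equation*}
Since $q(x_1) = q(x_2) = q_0$, the cost terms on both sides are identical, and the inequality collapses to $t(x_1) \geq t(x_2)$. Applying IC symmetrically to a seller of true type $x_2$ reporting $x_1$ yields $t(x_2) \geq t(x_1)$. Hence $t(x_1) = t(x_2)$, so $t$ is constant on every level set of $q$.

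Second, I would use this to construct the promised reparametrization. Define $\mathcal{Q} := \{q(x) : x \in \chi\}$, the image of $q(\cdot)$, and set $\tilde{t}: \mathcal{Q} \to \mathbb{R}$ by $\tilde{t}(q_0) := t(x)$ for any $x$ with $q(x) = q_0$; the previous step guarantees this is well defined. For $q_0 \notin \mathcal{Q}$ one can extend $\tilde{t}$ in any way (for example, setting it to $-\infty$ or to $\tilde{t}(q_0) = C(0,\underline{x})$), since such quantities are never chosen by the seller under IC, and so the induced mechanism implements the same allocation and transfer as the original $(q,t)$. Relabeling $\tilde{t}$ as $t$ gives the claim $t(x') = t(q(x'))$.

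I do not anticipate any real obstacle here: the argument is the standard ``taxation principle'' observation, and the only potential subtlety is handling off-image quantities in the definition of $\tilde{t}$, which is a non-issue once we note that the seller's optimization ranges only over $\mathcal{Q}$ in equilibrium. The lemma then justifies the transition in Step 3 from a mechanism specified as a pair of functions of the type $x$ to a pricing scheme $t(q)$ depending only on the delivered quantity, which is the form used in Theorem 1.
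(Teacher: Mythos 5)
Your proposal is correct and uses essentially the same argument as the paper: incentive compatibility forces $t$ to be constant on the level sets of $q$, which is exactly what the paper establishes (phrased as a contradiction rather than via the two symmetric IC inequalities). Your additional care in constructing $\tilde{t}$ on the image of $q$ and extending it off the image is a welcome tightening of the paper's terser writeup, but it does not change the underlying idea.
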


\textbf{Step 4.} As a consequence of Lemma \ref{pricing}, we determine an optimal mechanism sequentially. First, we determine the optimal payment function $t(\cdot)$, then the optimal energy procurement function $q(\cdot)$. For any function $t(\cdot)$, we determine, for each type $x$ of the seller, the optimal quantity $q^*(x)$ that she wishes to produce as

\vspace{-15pt}
\begin{eqnarray}
&&q^*(x)=\text{arg}\max_{l}\mathbb{E}_W\left\{t\left(l\right)- C(l,w,x)\right\}\label{agent-q}.
\end{eqnarray}
\vspace{-15pt}

Incentive compatibility then requires that the seller must tell the truth to achieve this optimal value, and cannot do better by lying, \emph{i.e.} $q(x)=q^*(x)$ for all $x\in\chi$. For any function $t(\cdot)$, this last equality can be taken as the definition for the associated function $q(\cdot)$. Thus, we eliminate the IC constraint by defining $q(\cdot):=q^*(\cdot)$ and reduce the problem of designing the optimal direct revelation mechanism $(q,t)$ to an equivalent problem where we determine only the optimal payment function $t(\cdot)$ subject to the voluntary participation constraint for the worst type.

\vspace{5pt}

\textbf{Step 5.} To solve this new equivalent problem, we write the buyer's expected utility as the integration of his marginal expected utility, and express the marginal expected utility in terms of the marginal price $p(q):=\frac{\partial t(q)}{\partial q}$ and the minimum payment $t(0)$ (which along with $p(\cdot)$ uniquely determines the payment function $t(\cdot)$). Specifically, in the appendix we show that
\begin{eqnarray}
&\hspace{-90pt}\mathbb{E}_{X}\hspace{-2pt}\left[\mathcal{V}(q^*(X))\right]\hspace{-2pt}-\hspace{-2pt}\mathbb{E}_{X}\hspace{-2pt}\left[t(q^*(X))\right]\hspace{-3pt}=\nonumber\\&\hspace{15pt}\int_{0}^{\infty}\hspace{-3pt}{P\left(x\in\chi | q^*(x)\hspace{-2pt}\geq\hspace{-2pt} l\right)\mathcal{V}'(l)dl}\nonumber\\&\hspace{40pt}-t(0)-\hspace{-3pt}\int_{0}^{\infty}\hspace{-3pt}{P\left(x\hspace{-1pt}\in\hspace{-1pt}\chi | q^*(x)\hspace{-2pt}\geq\hspace{-2pt} l\right)p(l)dl}\label{buyer-max1},
\hspace{-1pt}
\end{eqnarray}
where $\mathcal{V}'\hspace{-1pt}(q)\hspace{-3pt}:= \hspace{-3pt}\frac{d \mathcal{V}(q)}{dq}$.
That is, the buyer's total expected utility is obtained by integrating his marginal utility at quantity $l$,  times the probability that the seller's production exceeds $l$,  and subtracting the minimum payment $t(0)$.
We show in the appendix that the seller's optimal decision $q^*(x)$ depends only on the marginal price $p(q)$. Thus, 
we can write the probability associated with the seller's decision as 
\begin{eqnarray}
P\left(x\in\chi | q^*(x)\geq l\right)=P\left[x\in\chi|
p(l)\geq c(l,x)\right]\label{seller-p}.
\end{eqnarray}
That is, the seller is willing to produce the marginal quantity at $l$ if the resulting expected marginal profit is positive, \textit{i.e.} marginal price $p(l)$ exceeds marginal expected cost of generation $c(l,x)$\footnote{This relies on quasi-concavity of the seller's optimal decision program. This is a standard assumption in the literature, e.g. see \cite{multiscreening} and \cite{nonlinear}. Basically, it gives the seller the freedom to decide for each marginal unit of production independently. Therefore, the continuity of the resulting generation quantity must be checked posterior to the design of the optimal contract for each type of the seller.}. Using (\ref{buyer-max1}) and (\ref{seller-p}), we define the following problem that is equivalent to (P2) and is in terms of the marginal price $p(q)$ and the minimum payment $t(0)$.

\vspace{5pt}
\textbf{Problem P3:}
\begin{eqnarray}
&\hspace{-40pt}{\underset{p(\cdot),t(0)}{\textnormal{max}}}&\hspace{-30pt}
\int_{0}^{\infty}\hspace{-9pt}{P\hspace{-2pt}\left[x\hspace{-2pt}\in\hspace{-2pt}\chi|
p(l)\hspace{-2pt}\geq\hspace{-2pt} c(l,x)\right]\hspace{-2pt}\left(\mathcal{V}'(l)\hspace{-2pt}-\hspace{-2pt}p(l)\right)\hspace{-1pt}dl}\hspace{-2pt}-\hspace{-2pt}t(0)\\
&\hspace{-10pt}\textit{subject to}&\nonumber\\&\hspace{-46pt} \text{VP:}&\hspace{-37pt}\mathbb{E}_W\hspace{-2pt}\left\{t(0)\hspace{-1pt}+\hspace{-2pt}\int_0^{q(\underline{x})}\hspace{-10pt}p(r)dr-C(q^*(\underline{x}),w,\underline{x})\right\}\hspace{-4pt}\geq\hspace{-2pt}0.\label{p4-vp}
\end{eqnarray}

\textbf{Step 6.} We provide a ranking for the seller's optimal decision $q^*(x)$ based on the partial order among the seller's types.

\begin{lemma}
For a given mechanism specified by $(t(\cdot),q(\cdot))$, a better type of the seller produces more. That is, the optimal quantity $q^*(x)$  that the seller with true type $x$ wishes to produce satisfies the following properties:
\begin{description}
\item[a)] $\frac{\partial q^*(x)}{\partial x_i}\leq 0,1\leq i\leq m$,
\item[b)] $\frac{\partial q^*(x)}{\partial x_i}\geq 0, m<x\leq n$.
\end{description}
\end{lemma}

In the appendix we show that a consequence of corollary 1 and Lemma 3 is the following result.
\begin{corollary}
The VP constraint is satisfied if $t(0)\hspace{-3pt}=\hspace{-3pt}C(0,\hspace{-1pt}\underline{x})$ and the lowest seller's type payment is equal to her expected production cost, \textit{i.e.} $t(q(\underline{x}))\hspace{-2pt}=\hspace{-2pt}\mathbb{E}_W\hspace{-2pt}\left\{C(q(\underline{x}),W,\underline{x})\right\}$.
\end{corollary}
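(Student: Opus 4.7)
The plan is to derive Corollary 2 essentially as a consequence of Corollary 1 and Lemma 3. First, I would invoke Corollary 1 to collapse the continuum of constraints $U(x)\ge 0,\ \forall x\in\chi$, to the single equation $U(\underline{x})=0$ at the worst type. This reduction rests on Lemma 1 and the partial order in (A5): the seller's expected profit $U(x)=\mathbb{E}_W[t(q(x))-C(q(x),W,x)]$ is coordinate-wise monotone with signs dictated by (A5), so $\underline{x}$ attains the infimum of $U$ over $\chi$ and it suffices to verify $U(\underline{x})\ge 0$.

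Next, I would check that the stated conditions force $U(\underline{x})=0$ outright. By definition $U(\underline{x})=t(q(\underline{x}))-\mathbb{E}_W\{C(q(\underline{x}),W,\underline{x})\}$, so the second condition $t(q(\underline{x}))=\mathbb{E}_W\{C(q(\underline{x}),W,\underline{x})\}$ is literally $U(\underline{x})=0$. Together with the reduction from Corollary 1, this gives VP for every $x\in\chi$.

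Third, I would reconcile the two stated conditions with the representation $t(q)=t(0)+\int_0^q p(l)\,dl$ obtained in Step 5. Substituting $t(0)=C(0,\underline{x})$ into the second condition, and using $C(0,\underline{x})=\mathbb{E}_W\{C(0,W,\underline{x})\}$ from (A2), yields
\[
\int_0^{q(\underline{x})} p(l)\,dl \;=\; \int_0^{q(\underline{x})} c(l,\underline{x})\,dl,
\]
which is trivial when $q(\underline{x})=0$. When $q(\underline{x})>0$ I would invoke Lemma 3 together with the seller's quasi-concave first-order condition from Step 5: the worst type produces the smallest quantity under the partial order, and she supplies the marginal unit at $l$ if and only if $p(l)\ge c(l,\underline{x})$. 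Optimality at the boundary $q(\underline{x})$ then forces $p(l)=c(l,\underline{x})$ a.e.\ on $[0,q(\underline{x})]$, and the identity above follows.

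The main obstacle I expect is this reconciliation step: at first glance the two conditions look over-determined, since one pins down $t(0)$ while the other pins down $t(q(\underline{x}))$, leaving no freedom in $\int_0^{q(\underline{x})} p(l)\,dl$. The resolution is the familiar screening principle that the worst type earns zero information rent, which here manifests as $p(l)=c(l,\underline{x})$ on $\underline{x}$'s production range. Justifying this envelope identity via Lemma 3 and the seller's optimization is the nontrivial content of the corollary; once it is in hand, the remaining VP inequalities for $x\neq\underline{x}$ follow immediately from Corollary 1.
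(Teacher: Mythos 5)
Your first two steps are correct and match the paper's starting point: by Corollary~1 the VP constraint for all types collapses to $U(\underline{x})\geq 0$, and the hypothesis $t(q(\underline{x}))=\mathbb{E}_W\{C(q(\underline{x}),W,\underline{x})\}$ is literally $U(\underline{x})=0$, so sufficiency is immediate. The problem is your third step, which you yourself present as ``the nontrivial content of the corollary.'' You try to derive $p(l)=c(l,\underline{x})$ a.e.\ on $[0,q(\underline{x})]$ from the seller's optimality at the boundary, but the seller's quasi-concave program only yields $p(l)\geq c(l,\underline{x})$ for $l<q(\underline{x})$ and the first-order condition $p(q(\underline{x}))=c(q(\underline{x}),\underline{x})$ at the single boundary point; nothing about the worst type's best response forces equality on the interior of her production range (a schedule with $p(l)>c(l,\underline{x})$ strictly below $q(\underline{x})$ induces exactly the same $q(\underline{x})$). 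The identity $p(l)=c(l,\underline{x})$ on $[0,q^*(\underline{x})]$ is a property of the \emph{buyer-optimal} price, established only in Step~7 of the paper by pointwise maximization of $P\left[x\in\chi\,|\,\hat{p}\geq c(l,x)\right]\left(\mathcal{V}'(l)-\hat{p}\right)$, not a consequence of the seller's envelope condition.

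You have also misplaced where the corollary's real content lies. Since the two hypotheses jointly amount to $t(0)=C(0,\underline{x})$ together with $\int_0^{q(\underline{x})}p(l)\,dl=\int_0^{q(\underline{x})}c(l,\underline{x})\,dl$, there is no over-determination to resolve --- they are simply a normalization of $t$, and whether an \emph{optimal} $p$ compatible with it exists is settled later by the explicit construction in Step~7. What the paper's proof of this corollary actually establishes is that the normalization is \emph{without loss of optimality}: using Lemma~3 it shows $P\left[x\in\chi\,|\,p(l)\geq c(l,x)\right]=1$ for all $l\leq q^*(\underline{x})$, so the buyer's objective in (P3) depends on $t(0)$ and $\int_0^{q^*(\underline{x})}p(l)\,dl$ only through their sum, and the binding VP constraint pins that sum to $\mathbb{E}_W\{C(q^*(\underline{x}),W,\underline{x})\}$; any split of the sum is therefore equally good, and one may fix $t(0)=C(0,\underline{x})$. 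This optimality argument, which is what licenses the passage from (P3) to (P4), is absent from your proposal.
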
 

Based on corollary 2, we define a problem (P4) that is equivalent to (P3) and is only in terms of the marginal price $p(l)$ and the constraint that the payment the  seller's lowest type receives is equal to her cost of production. 

\vspace{5pt}

\textbf{Problem (P4)}
\begin{eqnarray}
&\hspace{-50pt}{\underset{p(\cdot)}{\textnormal{max}}}&\hspace{-30pt} \int_{0}^{\infty}{\hspace{-5pt}P\left[x\in\chi|
p(l)\geq c(l,x)\right]\left(\mathcal{V}'(l)-p(l)\right)dl}\\
&\hspace{-15pt}\text{subject to}&\nonumber\\  
&&\hspace{-50pt}\text{VP:}\hspace{4pt}
C(0,\underline{x})+\int_0^{q(\underline{x})}\hspace{-5pt}{p(l)dl}=\mathbb{E}_W\left[C(q(\underline{x}),W,x)\right]\label{P4-VP}.
\label{p5-con}
\end{eqnarray}

\textbf{Step 7.} We consider a relaxed version of (P4) without the VP constraint (\ref{P4-VP}). The unconstrained problem can be solved point-wise at each quantity $l$ to determine the optimal $p(l)$ as
\vspace{-3pt}
\begin{eqnarray}
\hspace{-20pt}&p(l)=&\hspace{-5pt}\textnormal{arg}\max_{\hat{p}}\left\{P\left[x\hspace{-1pt}\in\hspace{-1pt}\chi|
{\hat{p}}\geq c(q,x)\right]\hspace{-1pt}\left(\mathcal{V}'(q)-\hat{p}\right)\hspace{-1pt}\right\}\label{margp-gen}\hspace{-3pt}
\end{eqnarray}
\vspace{-1pt}
which is the same as (\ref{opt-p}). We show in the appendix that the solution to the unconstrained problem automatically satisfies the VP constraint (\ref{P4-VP}), therefore, the solution to the unconstrained problem determines the optimal marginal price $p(\cdot)$ for the original problem.
Using $p(\cdot)$ and the fact that $t(0)=C(0,\underline{x})$, we determine the optimal function $t(\cdot)$. Using $t(\cdot)$ we find the seller's best response function $q^*(\cdot)$. By incentive compatibility $q^*(\cdot)=q(\cdot)$, and this completely determines the optimal direct revelation mechanism $(q(\cdot),t(\cdot))$ described by Theorem 1.

In essence, Theorem 1 states that at each quantity $l$, the optimal marginal price $p(l)$ is chosen so as to maximize the expected total marginal utility at $l$, which is given by the total marginal utility $\left(\mathcal{V}'(l)-p(l)\right)$ times the probability that the seller generates at least $l$.

\vspace{5pt}
\begin{remark}
In a setup with startup cost for the seller, it might not be optimal for the buyer to require all the seller's types to voluntarily participate in the energy procurement process, since the minimum payment $t(0)$ depends on the production cost of the seller's worst type. In such cases, it might be optimal for the buyer to exclude some ``less efficient'' types of the seller from the contract, select an admissible set of seller's types, and then design the optimal contract for this admissible set of the seller's types\footnote{To find the optimal admissible set, the optimal contract can be computed for different potential admissible sets. Then, the resulting utilities can be compared to find the best admissible set.}. Note that, this is not the case for setups without startup cost. In such setups, if it is not optimal for some type $x$  to be included in the optimal contract, it is equivalent to have $q(x)=0$ for the optimal contract that considers all types of the seller.
\end{remark}

\begin{remark}
In problem (P1), we assume that there exists a seller's worst type which has the highest cost at any quantity among all the seller's types, and we reduce the VP constraint for all the seller's type to only the VP constraint for this worst type. As a result, we pin down the optimal payment function by setting $t(0)=C(0,\underline{x})$ to ensure the voluntary participation of the worst type, which consequently implies the voluntary participation for all the seller's types. In absence of the assumption on the existence of the seller's worst type, the argument used to reduce the VP constraint is not valid anymore and we cannot pin down the payment function and specify $t(0)$ a priori. Assuming that all types of the seller participate in the contract, their decision on the optimal quantity $q^*$ only depends on the marginal price $p(q)$, and therefore, the optimal marginal price $p(q)$ given by (\ref{margp-gen}) is still valid without the assumption on the existence of the worst type. To pin down the payment function $t(\cdot)$, we find the minimum payment $t(0)$ a posteriori so that all types of the seller voluntarily participate. That is,
\vspace{-3pt}
\begin{eqnarray}
t(0)=\max_{x\in\chi}\left[\mathbb{E}_W\left\{C(q(x),w,x)\right\}-\int_0^{q(x)}p(q)dq\right]
\end{eqnarray}
  \vspace{-1pt}
where the optimal decision of type $x$ is given by
  \vspace{-1pt}
\begin{eqnarray}
q(x)=\textnormal{arg}\max_q \left[\int_0^q p(q)-\mathbb{E}_W\left\{C(q,w,x)\right\}\right].
\end{eqnarray}
\end{remark}

\section{Example}
Consider a seller with a conventional plant and a wind turbine.
The wind turbine's output power curve $g(w)$ is as in figure 1. 
We assume that for wind speeds between $v_{ci}$ and $v_r$ the energy generation is given by $\gamma w^3$, where $\gamma$ captures the technology and size of the turbine. Energy generation remains constant for wind speeds between $v_r$ and $v_{co}$, and is zero otherwise.

\vspace{-5pt}
\begin{figure}[h!]
\centering
\includegraphics[width=0.30\textwidth, height=0.10\textheight]{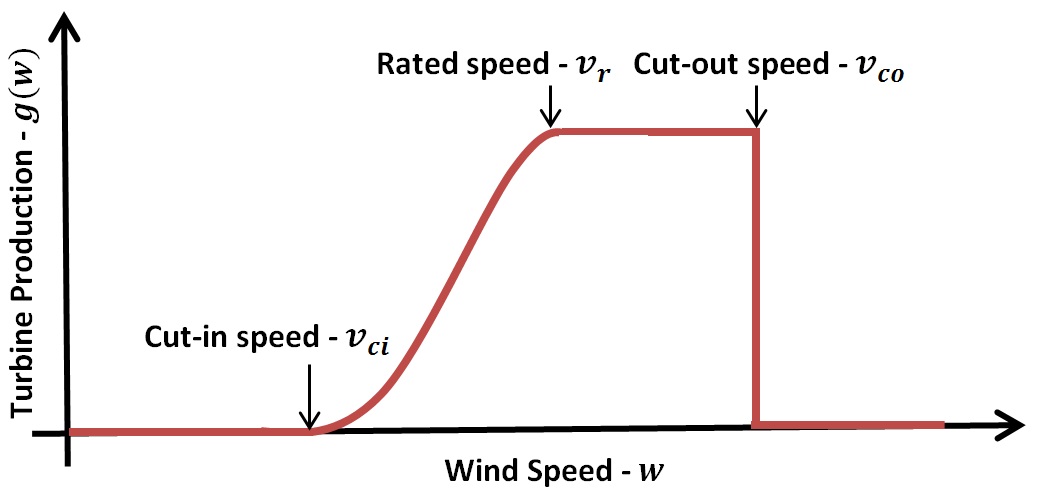}
  \caption{Example - the wind turbine generation curve}
  \label{curve}
\end{figure}

We assume that there is a fixed marginal operational cost $\theta_w$ for the wind turbine and a fixed marginal operational cost $\theta_c$ for the conventional plant. 
There is a no-production cost $c_0$ that captures the start-up cost for both plants and the capital cost for the seller. Therefore, the seller's type $x\hspace{-2pt}=\hspace{-2pt}(c_0,\theta_w,\theta_c,v_{ci},v_{r},\hspace{-1pt}v_{co},\hspace{-1pt}\gamma)$ is 7-dimensional and her total cost is given by
\begin{eqnarray}
C(q,w,\hspace{-1pt}x)\hspace{-2pt}=\hspace{-2pt}c_0\hspace{-2pt}+\hspace{-2pt}\theta_w\hspace{-2pt}\min\hspace{-2pt}\left\{\hspace{-1pt}q,g(w)\hspace{-1pt}\right\}\hspace{-3pt}+\hspace{-1pt}\theta_c\hspace{-2pt}\max\hspace{-2pt}\left\{\hspace{-1pt}q\hspace{-2pt}-\hspace{-2pt}g(w),\hspace{-1pt}0\hspace{-1pt}\right\}\hspace{-3pt},\hspace{-4pt}
\end{eqnarray} 
where $g(w)$ is as in figure \ref{curve}. The wind profile is a class $k\hspace{-2pt}=\hspace{-2pt}3$ Weibull distribution with average speed $5m/s$. 

We only consider 6 types for the seller here:\vspace{-6pt}
\begin{eqnarray}
&a&\hspace{-10pt}=\hspace{-2pt}(c_0\hspace{-4pt}=\hspace{-4pt}4,\theta_w\hspace{-4pt}=\hspace{-4pt}0.2,\theta_c\hspace{-4pt}=\hspace{-4pt}1.2,v_{ci}\hspace{-4pt}=\hspace{-4pt}3,v_{r}\hspace{-4pt}=\hspace{-4pt}13,v_{co}\hspace{-4pt}=\hspace{-4pt}20,\gamma\hspace{-4pt}=\hspace{-4pt}1),\nonumber\\
&b&\hspace{-10pt}=\hspace{-2pt}(c_0\hspace{-4pt}=\hspace{-4pt}4,\theta_w\hspace{-4pt}=\hspace{-4pt}0.2,\theta_c\hspace{-4pt}=\hspace{-4pt}1.2,v_{ci}\hspace{-4pt}=\hspace{-4pt}3,v_{r}\hspace{-4pt}=\hspace{-4pt}13,v_{co}\hspace{-4pt}=\hspace{-4pt}20,\gamma\hspace{-4pt}=\hspace{-4pt}2),\nonumber\\
&c&\hspace{-10pt}=\hspace{-2pt}(c_0\hspace{-4pt}=\hspace{-4pt}5,\theta_w\hspace{-4pt}=\hspace{-4pt}0.1,\theta_c\hspace{-4pt}=\hspace{-4pt}1.2,v_{ci}\hspace{-4pt}=\hspace{-4pt}3,v_{r}\hspace{-4pt}=\hspace{-4pt}13,v_{co}\hspace{-4pt}=\hspace{-4pt}20,\gamma\hspace{-4pt}=\hspace{-4pt}1),\nonumber\\ &d&\hspace{-10pt}=\hspace{-2pt}(c_0\hspace{-4pt}=\hspace{-4pt}5,\theta_w\hspace{-4pt}=\hspace{-4pt}0.2,\theta_c\hspace{-4pt}=\hspace{-4pt}1.0,v_{ci}\hspace{-4pt}=\hspace{-4pt}1,v_{r}\hspace{-4pt}=\hspace{-4pt}17,v_{co}\hspace{-4pt}=\hspace{-4pt}28,\gamma\hspace{-4pt}=\hspace{-4pt}2),\nonumber\\ &e&\hspace{-10pt}=\hspace{-2pt}(c_0\hspace{-4pt}=\hspace{-4pt}6,\theta_w\hspace{-4pt}=\hspace{-4pt}0.1,\theta_c\hspace{-4pt}=\hspace{-4pt}1.0,v_{ci}\hspace{-4pt}=\hspace{-4pt}1,v_{r}\hspace{-4pt}=\hspace{-4pt}17,v_{co}\hspace{-4pt}=\hspace{-4pt}28,\gamma\hspace{-4pt}=\hspace{-4pt}1),\nonumber\\ &f&\hspace{-10pt}=\hspace{-2pt}(c_0\hspace{-4pt}=\hspace{-4pt}6,\theta_w\hspace{-4pt}=\hspace{-4pt}0.1,\theta_c\hspace{-4pt}=\hspace{-4pt}1.0,v_{ci}\hspace{-4pt}=\hspace{-4pt}1,v_{r}\hspace{-4pt}=\hspace{-4pt}13,v_{co}\hspace{-4pt}=\hspace{-4pt}28,\gamma\hspace{-4pt}=\hspace{-4pt}2),\nonumber
\end{eqnarray} where the cost unit is $\$1000$ and the energy unit is $MWh$, and there is no worst type. The optimal contract from Theorem 1 is depicted in Figure \ref{example2}. It is a nonlinear pricing scheme. The marginal price varies between $0.33$ and $0.45$ $\$/KWh$. The variation in the marginal price is of the same order as the variation in the expected marginal production cost across the seller's different types.
\begin{figure}[h!]
\centering
\includegraphics[width=0.51\textwidth]{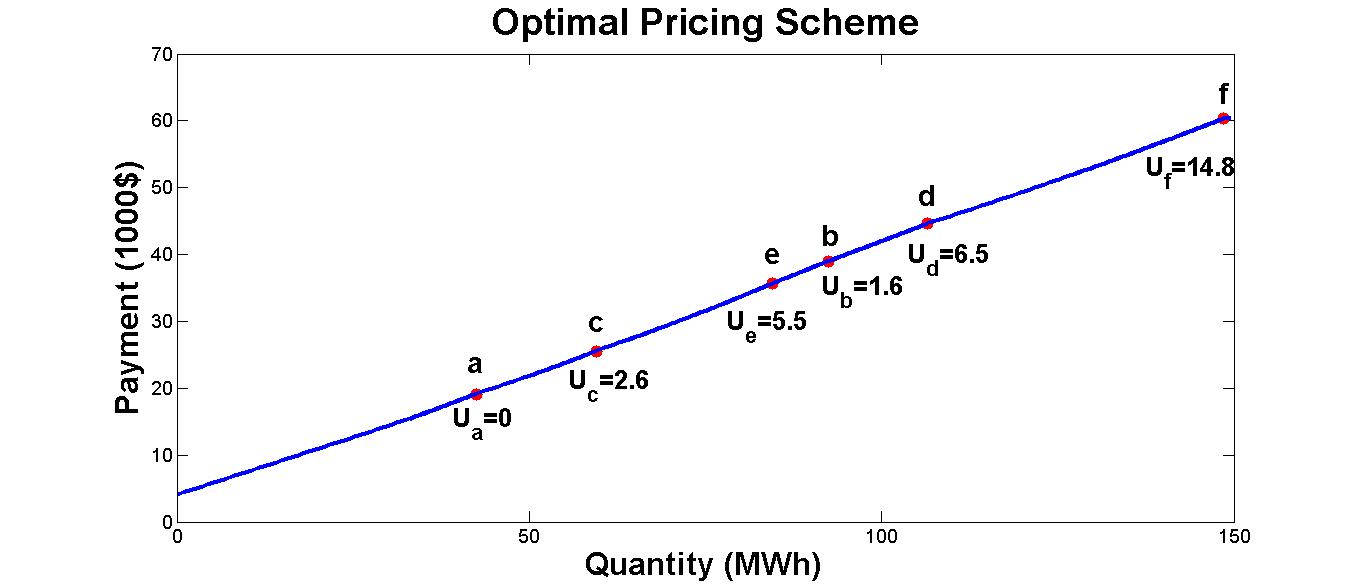}
  \caption{Example - the optimal pricing scheme}
    \vspace{-5pt}
  \label{example2}
\end{figure}
Since there is no complete order among the different seller's types, we can not compare the seller's types based on the expected revenue or amount of production prior to the design of the mechanism.
However, wherever we have a partial order and can rank a subset of types, we can utilize Lemmas 1 and 3 to predict a ranking a priori. For instance, the seller with type $(b)$ is better than the one with type $(a)$, and we can say a priori that the former has a higher production and revenue. However, we cannot order types $(b)$ and $(c)$. 
For the setup of our example, according to the optimal contract, type $(c)$ gets a higher expected revenue than type $(b)$ but produces a lower amount of energy than type $(b)$.

  \vspace{-4pt}
\section{Discussion}

The  optimal mechanism/contract for the energy procurement problem formulated in this paper is a nonlinear pricing scheme. The nonlinearity is due to three factors. First, the buyer's utility function $\mathcal{V}(q)$ is not linear in the quantity $q$. Second, for each type of the seller, the cost of production is a nonlinear function of the amount of produced energy. Third, the seller has private information about her technology and cost (seller's type).

The uncertainty about the production from the non-conventional plant makes the total expected production cost function nonlinear even with a fixed marginal cost of production for conventional and renewable plants. The buyer has to pay information rent (monetary incentive) to the seller to incentivize her to reveal her true type. Therefore, the payment the buyer makes to the seller includes the cost of production the seller incurs plus the information rent, which varies with the seller's type; the better the seller's type, the higher is the information rent.

The optimal contract/mechanism discovered in this paper can be implemented indirectly (without reporting the seller's private information) as follows: the buyer offers the seller a menu of contracts (nonlinear pricing scheme); the seller chooses one of these contracts based on her type, and there is no need for an implausible and unnecessary information exchange stage between the seller and the buyer.

The multi-dimensionality of the seller's private information could be due to the different types of energy generators that she owns, or because of a complex cost function with more than one parameter (even with one type of energy generator). The solution approach presented in this paper can be used to solve problems with multi-dimensional private information with similar structure.

The optimal contract defined by Theorem 1 also induces some incentives for investment in infrastructure and technology development. From Lemma 1, the seller with the higher type has a higher utility. Therefore, there is an incentive for the seller to improve her technology and decrease her cost of generation.

It is well-known that in the presence of private information and strategic behavior, in general, there exists no mechanism/contract that is (1) individually rational, (2) incentive compatible, and (3) efficient (Pareto-optimal generally) \cite{rosenthal}. In the optimal contract/mechanism given by Theorem 1, the allocation for the seller's  different types is not ex-post efficient (Pareto-optimal) except for the seller's worst type who gets zero utility.

\section{Further Considerations}
\subsection{Commitment and Ex-post Voluntary Participation}

The voluntary participation constraint imposed in problem (P1) is interim. That is, the expected profit with respect to the weather for each type of the seller must be non-negative. By assumption (A9), once the seller agrees on the contract (this agreement takes place before the realization of the weather) she is fully committed to following the agreement, even if the realized profit is negative (because of the realization of the weather)\footnote{Since the seller's reserve utility is zero by not participating (outside option), we can always think of the seller walks away from the agreement for these negative profit realizations and not follow the mechanism rules.}. 
To ensure that the seller's realized profit is non-negative for all weather realizations, we impose an ex-post voluntary participation constraint. We replace the interim VP constraint (7) by 
  \vspace{-1pt}
\begin{eqnarray}
&\hspace{-20pt}\text{Ex-post VP:}&\hspace{-5pt}t(m^*)-C(q,w,x)\geq 0,\forall w,\;\forall x\hspace{-2pt}\in\hspace{-2pt}\chi.
\label{P1'-VP}
\end{eqnarray}
  \vspace{-2pt}
To obtain an ex-post voluntary participation constraint, we modify the payment function of the mechanism given by Theorem 1 as follows:
  \vspace{-1pt}
\begin{eqnarray}
\tilde{t}(q,w)\hspace{-2pt}=\hspace{-2pt}t(q)-t(q(\underline{x}))+C(q(\underline{x}),w,\underline{x}).
\end{eqnarray}
  \vspace{-1pt}
We have $\mathbb{E}_W\hspace{-3pt}\left\{\tilde{t}(q,w)\hspace{-1pt}\right\}\hspace{-3pt}=\hspace{-3pt}t(q)$, and therefore, the seller always chooses the same quantity $q$ under the modified payment function $\tilde{t}(\cdot)$ as under the original payment function $t(q)$ given by (\ref{optt-gen}). Note that for the seller's worst type, we have $\tilde{t}(q(\underline{x}),\hspace{-1pt}w)\hspace{-2pt}=\hspace{-2pt}C(q(\underline{x}),\hspace{-1pt}w,\underline{x})$, and therefore, the total utility of the seller's worst type is zero for all realizations of $W$. Since all other types of the seller are better off than the worst type  under $\tilde{t}(\cdot)$ (all types have the choice to produce the same quantity as the worst type)
, the ex-post VP constraint is satisfied for all of the seller's types.

  \vspace{-5pt}
\subsection{Risk Allocation}

In the optimal mechanism/contract menu presented by Theorem 1, the buyer faces no uncertainty, and he is guaranteed to receive quantity $q(x)$, and all the risk associated with the realization of the weather is taken by the seller. We wish to modify the mechanism to reallocate the above-mentioned risk between the buyer and the seller. To do so, we modify the payment function so that the risk is reallocated between the buyer and the seller. Consider the following modified payment function with $\alpha\hspace{-2pt}\in\hspace{-2pt}[0,1]$,
  \vspace{-1pt}
\begin{eqnarray}
&\hat{t}(x,w)=&t(q(x))+\alpha\left[C(q(x),w,x)\right.\nonumber\\&&\left.-\mathbb{E}_W\left\{C(q(x),w,x)\right\}\right]\label{t-risk}.
\end{eqnarray}
From (\ref{t-risk}) it follows that $\mathbb{E}_W\hspace{-3pt}
\left\{\hat{t}(x,w)\right\} \hspace{-3pt}=\hspace{-2pt}t(q(x))$. Therefore, the strategic behavior of the seller does not change and the seller chooses the same quantity under the modified payment function $\hat{t}(\cdot)$ as under the original payment function $t(q)$ given by (\ref{optt-gen}). Note that for $\alpha=0$ we have the same payment as $t(q)$. For $\alpha=1$, the seller is completely insured against any risk and all the risk is taken by the buyer. The parameter $\alpha$ determines the allocation of the risk between the buyer and the seller; the buyer undertakes $\alpha$ and the seller undertakes $(1-\alpha)$ share of the risk.

\section{Conclusion} 

We proposed a model for energy procurement that captures the current approach for integration of modern renewable and conventional energy resources and takes into account the fact that renewable energy producers behave strategically and may have different production technologies. We analyzed an arbitration between a strategic energy buyer and a strategic energy seller who has the ability to generate energy from a conventional plant and a modern renewable plant and has private information about her production technology and cost. We showed that the optimal contract/mechanism for energy procurement is a nonlinear pricing scheme.
The originally proposed mechanism guarantees interim voluntary participation of the seller. By modifying the seller's payment so as to be weather-dependent we achieved ex-post voluntary participation of the seller. We also presented an alternative payment to the seller that divides the risk (due to the uncertainty in the weather) between the buyer and the seller.

\section{Acknowledgment}

The authors would like to thank Galina Schwartz for her helpful comments on the paper. This work was supported in part by NSF Grant CNS-1238962.

\addcontentsline{toc}{section}{References}
\bibliographystyle{plain}
\bibliography{bib}
\section*{\LARGE{Appendix}}   
\vspace{10pt}
\section*{Details and Proofs of the Results}
\vspace{5pt}
Consider the following problem (P1) formulated in the paper.
\textbf{Problem (P1):} 
\begin{eqnarray}
&\hspace{-25pt}{\underset{(\mathcal{M},(q,t))}{\textnormal{maximize}}}& \mathbb{E}_{X,W}\left\{\mathcal{V}(q)-t\right\}
\label{P1}\\\nonumber\vspace{-6pt}\\
&\textit{subject to}\nonumber&\\
&&\hspace{-65pt}\text{VP:}\; t(m^*)\hspace{-2pt}-\hspace{-2pt}\mathbb{E}_W\hspace{-3pt}\left\{C(q(m^*),W,x)\right\}\hspace{-2pt}\geq\hspace{-2pt}0,\forall x\hspace{-2pt}\in \hspace{-3pt}\chi.
\end{eqnarray}
where $\left\{\mathcal{M},(q,t).q:\mathcal{M}\hspace{-1pt}\rightarrow\hspace{-1pt} \mathbb{R}_+,t:\mathcal{M}\hspace{-1pt}\rightarrow\hspace{-1pt} \mathbb{R}_+\right\}$  denotes the mechanism to be designed, and the notation in (1) and (2) is the same as in the paper.

The main result in the paper is given by Theorem 1 stated below.
\begin{theorem}
The optimal mechanism $(q,t)$ for the buyer is a menu of contracts (nonlinear pricing) given by
\begin{eqnarray}
\hspace{-20pt}&p(q)=&\textnormal{arg}\max_{\hat{p}}\left\{P\left[x\in\chi|
{\hat{p}}\geq c(q,x)\right]\left(\mathcal{V}'(q)-\hat{p}\right)\right\}\label{margp-gen}\hspace{-3pt},\\
\hspace{-20pt}&t(q)=&\int_{0}^q{p(l)dl}+C(0,\underline{x}),\label{optt-gen}\\
\hspace{-20pt}&q(x)=&\textnormal{arg}\max_{l\in\mathbf{R}_+}\mathbb{E}\left\{t\left(l\right)-\mathbb{E}_W\left\{C(l,x,w)\right\}\right\}.
\end{eqnarray}
\end{theorem}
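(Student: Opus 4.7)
The plan is to follow the seven-step route outlined after the theorem statement, converting the infinite-dimensional design problem into a pointwise maximization by exploiting the partial order on types together with an envelope-style integration trick. I would first invoke the revelation principle so that attention can be restricted, without loss of optimality, to direct revelation mechanisms $(q(\cdot),t(\cdot))$ satisfying IC and VP, yielding the equivalent problem (P2).

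Next, I would use the partial order furnished by (A5) to compress the VP constraint. To prove Lemma~\ref{ut-order}, I would apply the standard envelope argument to the seller's IC program and invoke the coordinatewise monotonicity of $c(\cdot,x)$ in $x$ asserted in (A5); this forces $\partial U/\partial x_i$ to have the sign stated in the lemma. Corollary~1 then follows since $\underline{x}$ is coordinatewise the worst type, so $U(\underline{x})=\min_x U(x)$ and VP collapses to the single equation $U(\underline{x})=0$. Lemma~\ref{pricing} is obtained by observing that if $q(x_1)=q(x_2)$ but $t(x_1)\neq t(x_2)$, the higher-paid type would be strictly preferred by both, violating IC; hence $t$ factors through $q$ and the mechanism is necessarily a pricing scheme. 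This lets me replace IC by the defining identity $q(x)=q^*(x)$, where $q^*(x)$ is the seller's best response to the price schedule $t(\cdot)$.

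The central computation is the identity (\ref{buyer-max1}). Writing $\mathcal{V}(q)=\int_0^q\mathcal{V}'(l)\,dl$ and $t(q)=t(0)+\int_0^q p(l)\,dl$, interchanging integration and expectation rewrites the buyer's objective as a single integral of $(\mathcal{V}'(l)-p(l))$ against the survival function $P(q^*(x)\geq l)$, minus the lump-sum $t(0)$. Quasi-concavity of the seller's program (the standard assumption flagged in the footnote) yields the marginal characterization $q^*(x)\geq l\iff p(l)\geq c(l,x)$, so the objective becomes the pointwise-separable integrand of (P4). Lemma~3 follows from the same marginal characterization together with the coordinatewise monotonicity of $c(l,x)$: a better type faces a pointwise-smaller marginal cost curve, so the region where $p(l)\geq c(l,x)$ is larger.

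Finally, I would solve the relaxed version of (P4) pointwise at each $l$, which immediately gives (\ref{margp-gen}), and then verify that the VP constraint is automatically satisfied. The main obstacle is precisely this last verification: with $t(0)=C(0,\underline{x})$, one must show $\int_0^{q(\underline{x})}p(l)\,dl\geq\mathbb{E}_W[C(q(\underline{x}),W,\underline{x})]-C(0,\underline{x})$. The argument I have in mind exploits the marginal characterization for the worst type: every $l\in[0,q(\underline{x}))$ must satisfy $p(l)\geq c(l,\underline{x})$ (else the worst type would find it strictly profitable to produce less), and integrating this pointwise inequality, together with the identity $\mathbb{E}_W[C(q(\underline{x}),W,\underline{x})]-C(0,\underline{x})=\int_0^{q(\underline{x})}c(l,\underline{x})\,dl$, delivers VP. Once VP is verified, $t(\cdot)$ is recovered from (\ref{optt-gen}) and $q(\cdot)=q^*(\cdot)$ by IC, giving exactly the mechanism stated in the theorem.
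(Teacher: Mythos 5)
Your proposal is correct and follows essentially the same route as the paper's proof: revelation principle, envelope-theorem ordering of $U(x)$ to collapse VP to the worst type, reduction to a pricing scheme, rewriting the buyer's objective against the survival function $P(q^*(x)\geq l)$, and pointwise maximization in $\hat p$. The only (harmless) divergences are cosmetic: you obtain the survival-function identity by Fubini rather than integration by parts, you deduce Lemma~3 from the marginal characterization $q^*(x)\geq l\iff p(l)\geq c(l,x)$ rather than from the revealed-preference sum of IC inequalities, and you close VP by integrating the inequality $p(l)\geq c(l,\underline{x})$ on $[0,q(\underline{x}))$, whereas the paper shows the pointwise optimum actually gives $p(l)=c(l,\underline{x})$ there so that VP binds with equality.
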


\vspace{5pt}
In this note, we provide all the details of the proof of Theorem 1 that were left out of the presentation in the paper due to lack of space. We follow the same steps as in the paper.

\begin{proof}


We proceed to solve (P1) and prove theorem 1 by the following steps. 

\vspace{5pt}

\textbf{Step 1.} By invoking the revelation principle \cite{dasgupta-revelation}, we restrict attention, without loss of optimality, to direct revelation mechanisms that are incentive compatible (defined below) and individually rational (individual rationality is equivalent to voluntary participation). 

\begin{mydef}
A direct revelation mechanism is defined by functions $q:\chi\rightarrow\mathbf{R}_+$ and $t:\chi\rightarrow\mathbf{R}_+$.
and works as follows: 
\begin{itemize}
\item First, the buyer announces functions $q$ and $t$.
\item Second, the seller declares some $x'\in\chi$ as her report for her type.
\item Third, the seller is paid $t(x')$ to deliver $q(x')$ amount of energy.
\end{itemize}
\end{mydef}

Note that the seller is strategic and may lie and misreport her private information $x$, \textit{i.e.} we do not necessarily have $x'=x$.

\vspace{5pt}

\textbf{The revelation principle:}
For any BNE $\tilde{m}^*$ of the game induced by an arbitrary mechanism $(\tilde{\mathcal{M}},(\tilde{q},\tilde{t}))$, there exists an equivalent direct revelation mechanism $\left(\chi,(q,t)\right)$, in which truthful reporting is a BNE of the game induced by $\left(\chi,(q,t)\right)$, and the players' allocation and payment associated with the truth-telling equilibrium are identical to those associated with the BNE $\tilde{m}^*$ of the original mechanism $(\tilde{\mathcal{M}},(\tilde{q},\tilde{t}))$.

\vspace{5pt}
In essence, by invoking the revelation principle we eliminate the problem of finding the optimal message space $\mathcal{M}$ by restricting attention to direct revelation mechanisms ($\mathcal{M}\hspace{-4pt}:=\hspace{-4pt}\chi$), and impose a new set of incentive compatibility (IC) constraints. As a result, we can solve the following problem (P2) to find an optimal direct revelation mechanism.
\vspace{5pt}

\textbf{Problem P2:} Determine functions $q:\chi\rightarrow\mathbf{R}_+$ and $t:\chi\rightarrow\mathbf{R}_+$ so as to
\begin{eqnarray}
&&\hspace{-25pt}{\underset{(q,t)}{\textnormal{maximize}}}\quad \mathbb{E}_{X,W}\left\{\mathcal{V}(q(X))-t(X)\right\}\\\nonumber\\
&&\hspace{-15pt}\textit{subject to:}\nonumber\\&&\hspace{-25pt}IC\hspace{-3pt}: x\hspace{-2pt}=\hspace{-2pt}\textnormal{arg}\max_{x'}\mathbb{E}_W\hspace{-2pt}\left[t(x')\hspace{-2pt}-\hspace{-2pt}C(q(x'),W,x)\right],\forall x\hspace{-2pt}\in\hspace{-2pt}\chi,\\
&&\hspace{-25pt}VP\hspace{-3pt}: \mathbb{E}_W\left[t(x)-C(q(x),W,x)\right]\geq0,\;\forall x\in\chi.
\end{eqnarray}

\vspace{5pt}
\textbf{Step 2.} We utilize the partial order among the seller's different types to order the seller's resulting utility for her different types and reduce the VP constraint for all of the seller's types to the VP constraint only for the seller's worst type. 

\begin{lemma}
For a given incentive compatible mechanism $(q,t)$, a better type of the seller gets a higher utility. That is, let
$U(x):=\mathbb{E}_W\left\{t(q(x))-C(q(x),W,x)\right\}$ denote the 
expected profit of the seller with type $x$. Then,
\begin{enumerate}
\item $\frac{\partial U}{\partial x_i}\leq 0, 1\leq i\leq m$,
\item $\frac{\partial U}{\partial x_i}\geq 0, m< i\leq n$.
\end{enumerate}
\label{ut-order}
\end{lemma}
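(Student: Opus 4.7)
The plan is the standard incentive-compatibility trick used to establish monotonicity of the agent's indirect utility. For any two types $x, \hat{x} \in \chi$, the IC constraint applied to a type-$x$ seller who contemplates announcing $\hat{x}$ yields
\begin{equation*}
U(x) \geq \mathbb{E}_W\!\left[t(q(\hat{x})) - C(q(\hat{x}), W, x)\right].
\end{equation*}
Subtracting the definition of $U(\hat{x}) = \mathbb{E}_W[t(q(\hat{x})) - C(q(\hat{x}),W,\hat{x})]$ from both sides cancels the transfer and gives the key inequality
\begin{equation*}
U(x) - U(\hat{x}) \geq \mathbb{E}_W\!\left[C(q(\hat{x}), W, \hat{x})\right] - \mathbb{E}_W\!\left[C(q(\hat{x}), W, x)\right].
\end{equation*}

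Next, I would sign the right-hand side using (A5). Writing $\mathbb{E}_W[C(q,W,x)] = x_1 + \int_0^q c(l,x)\,dl$ (the start-up value from (A2) integrated against the definition of the expected marginal cost), the sign conditions on the partial derivatives of $c(l,\cdot)$ stated in (A5) transfer directly to the integrated expected cost: $\mathbb{E}_W[C(q,W,\cdot)]$ is nondecreasing in $x_i$ for $1 \leq i \leq m$ and nonincreasing in $x_i$ for $m < i \leq n$, at every fixed $q \geq 0$. Applied with $q = q(\hat{x})$, this shows that the right-hand side of the key inequality is nonnegative whenever $x$ dominates $\hat{x}$ coordinatewise in the sense of Definition 1.

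Specialising to $\hat{x} = x + \epsilon e_i$ for small $\epsilon > 0$ and each coordinate direction $e_i$ then yields both parts of the lemma. If $i \leq m$ the perturbation moves to a (weakly) worse type, so $U(x) \geq U(x + \epsilon e_i)$ and $\partial U/\partial x_i \leq 0$; if $i > m$ the perturbation moves to a better type, giving the opposite inequality. The only mildly delicate point is that writing partial derivatives presupposes differentiability of $U$; this is standard, because $U(x) = \max_{x'} \mathbb{E}_W[t(q(x')) - C(q(x'), W, x)]$ is the upper envelope of a family of functions that are smooth in $x$ under the regularity of $C$ from (A2), so $U$ is locally Lipschitz and a.e.\ differentiable, and the stated sign conditions hold wherever the partials exist. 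I do not expect this step to be the obstacle; the substantive content of the lemma is the single-line IC inequality combined with the coordinatewise monotonicity of the expected cost inherited from (A5).
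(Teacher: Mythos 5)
Your proof is correct and follows essentially the same route as the paper: the paper applies the envelope theorem to $U(x)=\max_{x'}\mathbb{E}_W\{t(q(x'))-C(q(x'),W,x)\}$ and then signs $\partial_{x_i}\mathbb{E}_W\{C\}$ via (A5), while your two-type IC inequality $U(x)-U(\hat{x})\geq\mathbb{E}_W[C(q(\hat{x}),W,\hat{x})]-\mathbb{E}_W[C(q(\hat{x}),W,x)]$ is just the hand-derived, one-sided form of that same envelope bound. Your explicit decomposition $\mathbb{E}_W[C(q,W,x)]=x_1+\int_0^q c(l,x)\,dl$ and the remark on a.e.\ differentiability of the upper envelope are welcome additions the paper leaves implicit, but they do not change the argument.
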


\begin{proof}[Proof of lemma 1]
The given mechanism $(q,t)$ is incentive compatible, so we can rewrite $U(x)$ as
\begin{eqnarray}
U(x)=\max_{x'}{\mathbb{E}_W\left\{t(q(x'))-C(q(x'),W,x)\right\}}\label{Lemma1-U}
\end{eqnarray}
By applying the envelope theorem \cite{envelope} on (\ref{Lemma1-U}), we get 
\begin{eqnarray}
\frac{\partial U}{\partial x_i}=-\left.\frac{\partial \mathbb{E}_W\left\{C(q(x'),W,x)\right\}}{x_i}\right|_{x'=x}.
\end{eqnarray}
The above equation along with assumption (A5) on the marginal expected cost, gives
\begin{eqnarray}
&&\frac{\partial U}{\partial x_i}\leq 0,1\leq i \leq m\\
&&\frac{\partial U}{\partial x_i}\geq 0,m< i \leq n
\end{eqnarray}

\end{proof}
A direct consequence of Lemma 1 is the following.
\begin{corollary}
The voluntary participation constraint, is satisfied if and only if it is satisfied for the worst type $\underline{x}$, that is the general VP constraint (8) can be reduced to
\begin{eqnarray}
U(\underline{x})=\mathbb{E}_W\left\{t(q(\underline{x}))-C(q(\underline{x}),W,\underline{x})\right\}\geq0.
\end{eqnarray}
\end{corollary}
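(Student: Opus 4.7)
The plan is to show that $\underline{x}$ is the utility-minimizing type among all seller types in $\chi$, so that the single constraint $U(\underline{x}) \geq 0$ automatically enforces $U(x) \geq 0$ for every other $x$. The ``only if'' direction is immediate since $\underline{x} \in \chi$, so the real content is the ``if'' direction.

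First I would fix an arbitrary $x \in \chi$ and construct a path in $\chi$ from $\underline{x}$ to $x$ along which I can integrate the partial derivatives supplied by Lemma 1. A convenient choice is the coordinate-wise path: move from $\underline{x}$ to $x$ one coordinate at a time, increasing the first $m$ coordinates (where $\underline{x}_i \leq x_i$) and decreasing the last $n-m$ coordinates (where $\underline{x}_j \geq x_j$). Along the $i$-th segment with $1 \leq i \leq m$, only the $i$-th coordinate varies upward, and by part (1) of Lemma 1 the integrand $\partial U/\partial x_i$ is non-positive, so $U$ does not increase along this segment. Along the $j$-th segment with $m < j \leq n$, only the $j$-th coordinate varies downward, and by part (2) of Lemma 1 the integrand $\partial U/\partial x_j$ is non-negative, so again $U$ does not increase. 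Concatenating all segments yields $U(x) \geq U(\underline{x})$.

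Hence, if $U(\underline{x}) \geq 0$, then $U(x) \geq 0$ for every $x \in \chi$, and the family of VP constraints in (8) collapses to the single constraint on the worst type. Conversely, any mechanism satisfying (8) in particular satisfies it at $x = \underline{x}$, establishing equivalence.

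The only delicate point is implicitly assumed regularity: to integrate $\partial U/\partial x_i$ along the straight-line segments we need $U$ to be absolutely continuous in each coordinate, and we need $\chi$ to contain these segments. The former follows from applying the envelope-theorem argument used in Lemma 1 (which gave $\partial U/\partial x_i$ as a well-defined function of $x$) together with standard smoothness of $\mathbb{E}_W\{C(q,W,x)\}$ in $x$; the latter can be handled by assuming $\chi$ is a product set in the relevant coordinates, or more generally by noting that the argument needs only a monotone path in $\chi$ from $\underline{x}$ to $x$, which exists whenever $\chi$ is connected and closed under coordinate-wise ``moves toward $\underline{x}$.'' If $\chi$ is not so well-behaved, the same conclusion can be obtained directly by comparing $U(x)$ with $U(\underline{x})$ via the IC inequality $U(x) \geq \mathbb{E}_W\{t(q(\underline{x})) - C(q(\underline{x}),W,x)\}$ and then using assumption (A5) to bound the last expectation by $\mathbb{E}_W\{C(q(\underline{x}),W,\underline{x})\}$, yielding $U(x) \geq U(\underline{x})$ without any integration.
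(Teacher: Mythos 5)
Your overall strategy is the same as the paper's: Corollary~1 is presented there as a direct consequence of Lemma~1, with the implicit argument being exactly your integration of the derivative bounds along a monotone coordinate-wise path from $\underline{x}$ to an arbitrary $x$. However, your path argument as written is internally inconsistent. You correctly conclude from Lemma~1 that $U$ does not increase along every segment of the path from $\underline{x}$ to $x$, but that yields $U(x)\leq U(\underline{x})$, the opposite of the inequality you then assert. The root of the trouble is that (A5) as printed assigns $\underline{x}$ the coordinate signs of the \emph{lowest}-cost type: since $c(q,x)$ is increasing in $x_i$ for $i\leq m$ and the text after Definition~1 says a type is better when those coordinates are smaller, the worst (highest-cost, lowest-utility) type must satisfy $\underline{x}_i\geq x_i$ for $1\leq i\leq m$ and $\underline{x}_j\leq x_j$ for $m<j\leq n$. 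With that correction, the path from $\underline{x}$ to $x$ \emph{decreases} the first $m$ coordinates (where $\partial U/\partial x_i\leq 0$) and \emph{increases} the remaining ones (where $\partial U/\partial x_j\geq 0$), so $U$ is non-decreasing along it and $U(x)\geq U(\underline{x})$ follows. You should fix the direction of the path (or of the inequalities) before the argument is sound.

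Your closing remark, by contrast, contains a complete and cleaner proof that differs from the paper's route: by incentive compatibility, $U(x)\geq \mathbb{E}_W\left\{t(q(\underline{x}))-C(q(\underline{x}),W,x)\right\}\geq \mathbb{E}_W\left\{t(q(\underline{x}))-C(q(\underline{x}),W,\underline{x})\right\}=U(\underline{x})$, where the second inequality uses only the defining property of the worst type (highest expected cost at every quantity). This two-line argument needs no envelope theorem, no differentiability of $U$, and no assumption that $\chi$ contains monotone paths, so it sidesteps all the regularity caveats you raise about the integration approach; I would make it the primary proof rather than a fallback. The ``only if'' direction is, as you say, immediate since $\underline{x}\in\chi$.
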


\textbf{Step 3.} We show, via lemma 2 below, that without loss of optimality, we can restrict attention to a set of functions $t(\cdot)$ that depend only on the amount of delivered energy $q$. That is, the optimal mechanism is a pricing scheme.

\begin{lemma}
For any given pair of functions $(q,t)$ that satisfies the IC constraint, we can rewrite $t(x')$ as $t\left(q(x')\right)$.\label{pricing}
\end{lemma}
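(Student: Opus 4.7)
The plan is to show that the IC constraint by itself forces the payment, when viewed as a function of the reported type, to depend on $x'$ only through $q(x')$. The key observation is that when a seller of true type $x$ reports $x'$, her expected payoff $\mathbb{E}_W\{t(x') - C(q(x'), W, x)\}$ depends on the report only through the pair $(q(x'), t(x'))$: the cost involves the realized production level $q(x')$ and the seller's own type $x$, but never the label $x'$ directly. So the argument will be purely about disallowing two reports that induce the same $q$ but different $t$.

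Given this, the proof reduces to a one-line deviation argument. Suppose two types $x_1, x_2 \in \chi$ satisfy $q(x_1) = q(x_2)$ but $t(x_1) > t(x_2)$. A seller of true type $x_2$ who reports $x_1$ instead of $x_2$ incurs the same expected cost $\mathbb{E}_W\{C(q(x_1), W, x_2)\} = \mathbb{E}_W\{C(q(x_2), W, x_2)\}$ but collects the strictly larger transfer $t(x_1) > t(x_2)$, which contradicts the IC constraint. Hence $t$ is constant on every level set of $q$, and the rule $\bar{t}(q(x)) := t(x)$ defines a single-valued function $\bar{t}$ on the image $q(\chi)$ satisfying $t(x') = \bar{t}(q(x'))$ for every $x' \in \chi$, which is precisely the claim.

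The only subtlety worth flagging is the extension of $\bar{t}$ to quantities $l \notin q(\chi)$, which is needed in Step~4 where the seller's best response $q^*(x) = \arg\max_{l} \mathbb{E}_W\{\bar{t}(l) - C(l, W, x)\}$ is taken over all of $\mathbf{R}_+$. The extension has to be chosen so that no type strictly prefers producing some $l \notin q(\chi)$ over her assigned $q(x)$; setting $\bar{t}(l) \le \min_{x \in \chi}\bigl[\mathbb{E}_W\{C(l, W, x)\} + U(x)\bigr]$ is one simple sufficient choice, so no real obstacle arises here. I do not foresee any genuine difficulty in the argument itself: the lemma is essentially the observation that incentive-compatible direct-revelation payments cannot discriminate between two reports that induce identical allocations, and the content is captured by the above one-step deviation.
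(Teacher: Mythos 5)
Your argument is correct and is essentially the paper's own proof: the same one-step deviation showing that two reports with equal $q$ but unequal $t$ would violate IC, hence $t$ factors through $q$. Your added remark on extending $\bar{t}$ off the image $q(\chi)$ is a reasonable extra care not present in the paper, but it does not change the route.
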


\begin{proof}[Proof of lemma 2]
The proof is by contradiction. Assume that there exist $x,x'\in\chi$ such that $q(x)=q(x')$ but $t(x')>t(x)$. Then a seller with type $x$ is always better off by reporting $x'$ instead of her true type $x$, which contradicts the IC constraint.
\end{proof}
\vspace{5pt}
\textbf{Step 4.} 
As a consequence of lemma \ref{pricing}, we determine an optimal mechanism sequentially. First, we determine the optimal payment function $t(\cdot)$, then the optimal energy procurement function $q(\cdot)$.

For any given pair of functions $(q,t)$, a seller with true type $x$ solves the following maximization problem to find her optimal report $x'^*$,
\begin{eqnarray}
x'^*=\textnormal{arg}\max_{x'}\mathbb{E}_W\left\{t\left(q(x')\right)-C(q(x'),W,x)\right\}.
\label{agent-gen}
\end{eqnarray}

Considering the solution of (\ref{agent-gen}) for every type $x\hspace{-2pt}\in\hspace{-2pt}\chi$,  we can form a function $q^*:\chi\longrightarrow \mathbf{R}_+$ defined by
\begin{eqnarray}
q^*(x):=\textnormal{arg}\max_{l}\mathbb{E}_W\left\{t\left(l\right)- C(l,W,x)\right\}\label{agent-q}.
\end{eqnarray}
For a given direct revelation mechanism, the function $q^*(\cdot)$ defines the optimal quantity of energy procured from each type of the seller. Incentive compatibility then requires that the seller must tell the truth to achieve this optimal value, and cannot do better by lying, \emph{i.e.} 
\begin{eqnarray}
q(x)=q^*(x)\quad \forall x\in\chi.
\label{agent-rep}
\end{eqnarray}

The function $q^*(\cdot)$ is induced only by $t(\cdot)$ through (\ref{agent-q}), and therefore, (\ref{agent-rep}) can be taken as the definition for the associated function $q(\cdot)$ that along with $t(\cdot)$ satisfies the set of IC constraints. Thus, we can eliminate the IC constraint by defining $q(\cdot):=q^*(\cdot)$ and reduce the problem of designing the optimal direct revelation mechanism $(q,t)$ to an equivalent problem (P2') where we determine only the optimal payment function $t(\cdot)$ subject to the voluntary participation constraint for the worst type.
\vspace{5pt}

\textbf{Problem P2':} Determine function $t:\mathbf{R}_+\rightarrow\mathbf{R}_+$ so as to
\begin{eqnarray}
&&\hspace{-10pt}{\underset{t(\cdot)}{\textnormal{maximize}}}
\quad \mathbb{E}_{X}\left\{\mathcal{V}(q^*(X))-t(q^*(X))\right\}\label{buyer-obj-p2}\\
&&\hspace{-0pt}\text{subject to}\nonumber\\
&&\hspace{-10pt}\text{VP:}\; \mathbb{E}_W\hspace{-2pt}\left[t(q(\underline{x}))\hspace{-2pt}-\hspace{-2pt}C(q^*(\underline{x}),W,\underline{x})\right]\hspace{-2pt}\geq\hspace{-2pt}0,
\label{ind-3}
\end{eqnarray}
where $q^*$ is given by (\ref{agent-q}).

\vspace{5pt}

\textbf{Step 5.} To solve problem (P2'), we show that the optimal decision of the seller for amount of power $q^*$ depends only on the marginal price $p(q):=\frac{\partial t(q)}{\partial q}$ and express the buyer's expected utility in term of the marginal price.

Consider the buyer's objective (\ref{buyer-obj-p2}). For any function $t(\cdot)$, we can determine from (\ref{agent-q}) the cumulative distribution function for $q^*$, called $F_{q^*}$. Consequently, we can rewrite the buyer's objective as

\begin{eqnarray}
&&\mathbb{E}_{q^*}\left[\mathcal{V}(q^*)-t(q^*)\right]=\int_{0}^{\infty}{\left(\mathcal{V}(l)-t(l)\right)dF_{q^*}(l)}\nonumber\\
&&=\left.\left(F_{q^*}(l)-1\right)\left(\mathcal{V}(l)-t(l)\right)\right|_{0}^{\infty}\nonumber\\&&+
\int_{0}^{\infty}{\left(1-F_{q^*}(l)\right)\frac{d\left(\mathcal{V}(l)-t(l)\right)}{d l}dl}.
\label{maxobj-1}
\end{eqnarray}

We have
\begin{eqnarray}
\left.\left(F_{q^*}(l)-1\right)\left(\mathcal{V}(l)-t(l)\right)\right|_{0}^{\infty}=-t(0)
\label{t0}
\end{eqnarray}
because $\mathcal{V}(0)=0$ by assumption, and $\left(F_{q^*}(\infty)-1\right)=0$.

Because of (\ref{t0}), we can rewrite (\ref{maxobj-1}) as
\begin{eqnarray}
&\mathbb{E}_{q^*}\left[\mathcal{V}(q^*)-t(q^*)\right]&\hspace{-5pt}=\hspace{-3pt}\int_{0}^{\infty}\hspace{-3pt}{P\left(q^*\geq l\right)\left(\mathcal{V}'(l)-p(l)\right)dl}
\nonumber\\&&-t(0)
\label{buyer-max1}
\end{eqnarray}

where $\mathcal{V}'(l)=\frac{d \mathcal{V}(l)}{d l}$. 

\vspace{5pt}

We can rewrite $P\left(q^*\geq l\right)$ as
\begin{eqnarray}
&&\hspace{-20pt}P\hspace{-1pt}\left(q^*\hspace{-2pt}\geq\hspace{-2pt} l\right)\hspace{-3pt}=\hspace{-3pt}P[x\hspace{-2pt}\in\hspace{-2pt}\chi|\text{arg}\hspace{-1pt}\max_l\mathbb{E}_W\hspace{-3pt}\left\{t(q)\hspace{-2pt}-\hspace{-2pt}C(q(x),\hspace{-1pt}W,\hspace{-1pt}x)\right\}\hspace{-3pt}\geq\hspace{-2pt} l].\nonumber\\&&
\label{q-demand1}
\end{eqnarray}

We implicitly assume that the seller's problem given by (\ref{agent-q}) is continuous and quasi-concave\footnote{This is a standard assumption in literature, e.g. see \cite{multiscreening} and \cite{nonlinear}. Basically, it can be seen as a situation where the seller can decide for each marginal unit of production independently. Therefore, in general, there is no guarantee that the seller's independent decisions about each marginal unit of production results in a continuous and plausible total production quantity $q$. Therefore, the continuity of the result must be checked posteriori for each type of the seller.}, so that from the first order optimality condition for (\ref{agent-q}) we obtain
\begin{eqnarray}
p(q^*(x))=\left.\frac{\partial \mathbb{E}_W\left\{C(l,W,x)\right\}}{\partial l}\right|_{q^*(x)}.
\label{foc}
\end{eqnarray}
Therefore, each type of the seller wishes to produce more than quantity $l$ if and only if the marginal price $p(q)$ that she is paid at $l$ is higher than the expected marginal cost of production $c(l,x)$ that she incurs at $l$. Consequently, combining (\ref{q-demand1}) and (\ref{foc}) we obtain
\begin{eqnarray}
P\left(q^*\geq l\right)=P\left[x\hspace{-2pt}\in\hspace{-2pt}\chi|
p(l)\geq\frac{\partial \mathbb{E}_W\left\{C(l,W,x)\right\}}{\partial l}\right]
\label{q-demand2}.
\end{eqnarray}

Substituting (\ref{q-demand2}) in (\ref{buyer-max1}), we obtain the following alternative expression for the buyer's objective

\begin{eqnarray}
&\hspace{-8pt}\mathbb{E}_{q^*}\left[\mathcal{V}(q^*)\hspace{-2pt}-\hspace{-2pt}t(q^*)\right]\hspace{-2pt}=&\hspace{-12pt}
\int_{0}^{\infty}\hspace{-6pt}{P\hspace{-2pt}\left[x\hspace{-2pt}\in\hspace{-2pt}\chi|
p(l)\hspace{-2pt}\geq\hspace{-2pt}\frac{\partial \mathbb{E}_W\hspace{-3pt}\left\{C(l,\hspace{-1pt}W,\hspace{-1pt}x)\right\}}{\partial l}\right]}\nonumber\\&&\hspace{10pt}\left(\mathcal{V}'(l)\hspace{-2pt}-\hspace{-2pt}p(l)\right)dl\hspace{-3pt}-t(0).
\label{buyer-max2}
\end{eqnarray}

The buyer seeks to determine the marginal price $p(\cdot)$ and $t(0)$ to maximize the right hand side of (\ref{buyer-max2}), subject to the VP constraint for the seller's worst type.
Based on this consideration, we define the following problem (P3) that is equivalent to problem (P2').

\vspace{10pt}

\textbf{Problem P3:}
\begin{eqnarray}
&&\hspace{-20pt}{\underset{p(\cdot),t(0)}{\textnormal{maximize}}}\quad\hspace{-8pt}
 -\hspace{-1pt}t(0)\hspace{-2pt}+\hspace{-4pt}\int_{0}^{\infty}\hspace{-7pt}{P\hspace{-1pt}\left[x\hspace{-2pt}\in\hspace{-2pt}\chi|
p(l)\hspace{-2pt}\geq\hspace{-2pt} \frac{\partial \mathbb{E}_W\hspace{-3pt}\left\{C(l,w,x)\right\}}{\partial l}\right]}\nonumber\\&&\hspace{80pt}\left(\mathcal{V}'(l)-p(l)\right)dl\\
&&\textit{subject to}\quad\nonumber\\&& \hspace{-19pt}\text{VP:}\;\max_{l}{\hspace{-1pt}t(0)\hspace{-2pt}+\hspace{-4pt}\int_0^{q^*\hspace{-1pt}(\underline{x})}\hspace{-11pt}p(r)dr\hspace{-2pt}-\hspace{-1pt}\mathbb{E}_W\hspace{-3pt}\left\{C(q^*\hspace{-1pt}(\underline{x}),W,\underline{x})\hspace{-1pt}\right\}}\hspace{-4pt}\geq\hspace{-2pt}0.
\end{eqnarray}

\textbf{Step 6.} We provide a ranking for the seller's optimal decision $q^*(x)$ based on the partial order among the seller's types via the following lemma.

\begin{lemma}
For a given mechanism specified by $(t(\cdot),q(\cdot))$, a better type of the seller produces more. That is, the optimal quantity $q^*(x)$  that the seller with true type $x$ wishes to produce satisfies the following properties:
\begin{description}
\item[a)] $\frac{\partial q^*(x)}{\partial x_i}\leq 0,1\leq i\leq m$,
\item[b)] $\frac{\partial q^*(x)}{\partial x_i}\geq 0, m<x\leq n$.
\end{description}
\end{lemma}

\begin{proof}[Proof of lemma 3]
Let $x,x'\in\chi$, where $x$ is a better type than $x'$. From IC for seller's type $x$ we have
\begin{eqnarray}
&t(q(x))\hspace{-2pt}-\hspace{-2pt}\mathbb{E}_W\hspace{-3pt}\left\{C(q(x),W,x)\right\}\hspace{-2pt}\nonumber\\&\geq\nonumber\\&\hspace{-2pt} t(q(x'))\hspace{-2pt}-\hspace{-2pt}\mathbb{E}_W\hspace{-3pt}\left\{C(q(x'),W,x)\right\}\label{lemma3-1}
\end{eqnarray}
Similarly from IC for seller's type $x'$ we have
\begin{eqnarray}
&t(q(x'))-\mathbb{E}_W\left\{C(q(x'),W,x')\right\}\nonumber\\&\geq\nonumber\\& t(q(x))-\mathbb{E}_W\left\{C(q(x),W,x')\right\}\label{lemma3-2}
\end{eqnarray}
Subtracting (\ref{lemma3-2}) from (\ref{lemma3-1}), we get
\begin{eqnarray}
&\mathbb{E}_W\left\{C(q(x),W,x')\right\}-\mathbb{E}_W\left\{C(q(x'),W,x')\right\}\nonumber\\&\geq\nonumber\\ &\mathbb{E}_W\left\{C(q(x),W,x)\right\}-\mathbb{E}_W\left\{C(q(x'),W,x)\right\}\label{lemma3-3}
\end{eqnarray}
By assumption (A4), $\frac{d\mathbb{E}_W\hspace{-2pt}\left\{C(q,W,x)\right\}}{dq}\hspace{-2pt}\leq\hspace{-2pt} \frac{d\mathbb{E}_W\hspace{-2pt}\left\{C(q,W,x')\right\}}{dq}$ if $x$ is a better type than $x'$.
Therefore, (\ref{lemma3-3}) holds if and only if
\begin{eqnarray}
q(x)\geq q(x').
\end{eqnarray}
\end{proof}

The following result is a consequence of corollary 1 and lemma 3.
\begin{corollary}
The VP constraint is satisfied if $t(0)=C(0,\underline{x})$ and the lowest seller's type is paid exactly equal to her expected production cost, \textit{i.e.} $t(q(\underline{x}))=\mathbb{E}_W\left\{C(q(\underline{x}),W,\underline{x})\right\}$.
\end{corollary}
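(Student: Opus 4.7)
The plan is to reduce the VP constraint for every type to the single check $U(\underline{x}) \geq 0$ using Corollary 1, and then show that the two hypothesized conditions force this check to pass with equality. First I would unpack the worst type's utility under the pricing form $t(q) = \int_0^q p(l)\,dl + t(0)$. Using A2 together with the definition $c(l,\underline{x}) := \partial \mathbb{E}_W\{C(l,W,\underline{x})\}/\partial l$, the expected cost decomposes as
\begin{eqnarray*}
\mathbb{E}_W\{C(q(\underline{x}),W,\underline{x})\} = C(0,\underline{x}) + \int_0^{q(\underline{x})} c(l,\underline{x})\,dl,
\end{eqnarray*}
so substituting $t(0) = C(0,\underline{x})$ into $U(\underline{x}) = t(q(\underline{x})) - \mathbb{E}_W\{C(q(\underline{x}),W,\underline{x})\}$ yields $U(\underline{x}) = \int_0^{q(\underline{x})} (p(l) - c(l,\underline{x}))\,dl$. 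The second hypothesis is then equivalent to this integral vanishing, giving $U(\underline{x}) = 0$.

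Second, I would invoke Corollary 1: since a better type's expected profit is no smaller than that of a worse type (Lemma 1), $U(x) \geq U(\underline{x}) = 0$ for every $x \in \chi$, so VP is satisfied throughout $\chi$. Lemma 3 additionally confirms that $q(\underline{x})$ is the smallest production quantity among all seller's types, so the integral upper bound above is indeed the tightest one that appears in the VP check. This completes the logical core of the argument.

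The main obstacle is verifying that the two hypotheses are mutually consistent under the optimal pricing $p(\cdot)$ from Theorem 1, i.e., that choosing $t(0) = C(0,\underline{x})$ really forces $t(q(\underline{x})) = \mathbb{E}_W\{C(q(\underline{x}),W,\underline{x})\}$. To handle this, I would show $p(l) \leq c(l,\underline{x})$ for every $l$: for any $\hat{p} > c(l,\underline{x})$, assumption A5 gives $\hat{p} > c(l,x)$ for all $x \in \chi$, so the probability term in the argmax formula defining $p(l)$ equals one and the objective $\mathcal{V}'(l) - \hat{p}$ is strictly decreasing in $\hat{p}$; hence no maximizer lies above $c(l,\underline{x})$. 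Since the worst type then maximizes $\int_0^q (p(l) - c(l,\underline{x}))\,dl$ with a non-positive integrand, her optimal value is exactly $0$, producing the required integral equation and closing the consistency gap.
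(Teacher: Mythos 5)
Your argument is correct for the statement as literally written, but it proves it by a genuinely different route than the paper. You verify sufficiency directly: decompose $\mathbb{E}_W\{C(q(\underline{x}),W,\underline{x})\}=C(0,\underline{x})+\int_0^{q(\underline{x})}c(l,\underline{x})\,dl$, observe that the two hypotheses force $U(\underline{x})=0$, and then invoke Lemma 1 (monotonicity of $U$ in the type) to get $U(x)\geq U(\underline{x})=0$ for all $x$. The paper instead runs an optimality/normalization argument: it uses Lemma 3 to show $P\left[x\in\chi\,|\,p(l)\geq c(l,x)\right]=1$ for $l\leq q^{*}(\underline{x})$, splits the objective of (P3) at $q^{*}(\underline{x})$, and notes that both the objective and the binding VP constraint depend on $t(0)$ and on $p(\cdot)$ restricted to $[0,q^{*}(\underline{x})]$ only through the sum $t(0)+\int_0^{q^{*}(\underline{x})}p(l)\,dl$, so the split may be normalized to $t(0)=C(0,\underline{x})$ \emph{without loss of optimality}. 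That extra content is what the corollary is actually used for — it is what makes (P4) equivalent to (P3), not merely a sufficient restriction of it — and your direct verification does not by itself establish that fixing $t(0)=C(0,\underline{x})$ costs the buyer nothing. Your third paragraph partially compensates by showing $p(l)\leq c(l,\underline{x})$ under the optimal marginal price (a correct computation, and essentially the same one the paper performs in Step 7 to check that the unconstrained solution of (P4) meets the VP equality), but this checks consistency a posteriori against the already-derived $p(\cdot)$ rather than proving the normalization is harmless for every candidate $p(\cdot)$ in (P3). In short: your proof is a clean and more elementary proof of the stated implication; the paper's proof buys the without-loss-of-optimality direction that the subsequent reduction relies on, and you would need to add the paper's "common term in objective and constraint" observation (or an equivalent one) to fully support that reduction.
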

\begin{proof}[Proof of corollary 2]
Because of corollary 1, the VP constraint implies
\begin{eqnarray}
U(\underline{x})=t(q(\underline{x}))-\mathbb{E}_W\left[C(q^*(\underline{x}),W,x)\right] = 0,
\end{eqnarray} 
which is equivalent to
\begin{eqnarray}
t(0)+\int_0^{q^*(\underline{x})}{p(l)dl}=\mathbb{E}_W\left[C(q^*(\underline{x}),W,x)\right].
\label{VP-red}
\end{eqnarray}

Furthermore, from Lemma 3 it follows that if the worst type wishes to produce more than $q^*(\underline{x})$, then all types produce more than $q^*(\underline{x})$. Therefore,
\begin{eqnarray}
P\left[x\in\chi|
p(l)\geq c(l,x)\right]=1,\;\; \text{for}\; l\leq q^*(\underline{x}).
\label{pforlmin}
\end{eqnarray}

Using (\ref{pforlmin}), we can rewrite the objective function of problem (P3) as,
\begin{eqnarray}
&\hspace{-20pt}&-\left(t(0)+
\int_0^{q^*(\underline{x})}{p(l)dl}\right)+\int_0^{q^*(\underline{x})}{\mathcal{V}'(l)dl}
\nonumber\\&\hspace{-20pt}&+
\int_{q^*(\underline{x})}^{\infty}{P\left[x\in\chi|
p(l)\geq c(l,x)\right]\left(\mathcal{V}'(l)-p(l)\right)dl}\hspace{-1pt}.
\label{obj-red}
\end{eqnarray}

The term $t(0)+\int_0^{q^*(\underline{x})}{p(l)dl}$ appears in both the objective (\ref{obj-red}) and the VP constraint (\ref{VP-red}). Therefore, without loss of optimality, we can assume $t(0)=C(0,\underline{x})$, and set $t(q(\underline{x}))=\mathbb{E}_W\left\{C(q(\underline{x}),W,\underline{x})\right\}$.
\end{proof}

Using corollary 2, we define a problem (P4) that is equivalent to (P3) and is only in terms of the marginal price $p(\cdot)$ and the constraint that the payment the  seller's lowest type receives is equal to her cost of production. 

\vspace{5pt}

\textbf{Problem (P4)}
\begin{eqnarray}
&\hspace{-50pt}{\underset{p(\cdot)}{\textnormal{max}}}&\hspace{-15pt} \int_{0}^{\infty}{\hspace{-5pt}P\left[x\in\chi|
p(l)\geq c(l,x)\right]\left(\mathcal{V}'(l)-p(l)\right)dl}\\
&&\hspace{-25pt}\text{subject to}\nonumber\\&&\hspace{-30pt}\text{VP:}\;
C(0,\underline{x})+\int_0^{q^*(\underline{x})}{\hspace{-4pt}p(l)dl}\hspace{-1pt}=\hspace{-1pt}\mathbb{E}_W\left[C(q^*(\underline{x}),w,x)\right]\label{P4-VP}\hspace{-1pt}.
\label{p5-con}
\end{eqnarray}

\textbf{Step 7.} We consider a relaxed version of (P4) without the VP constraint (\ref{P4-VP}). The unconstrained problem (P4) can be solved by maximizing the integrand $P\left[x\in\chi|
p(l)\geq c(l,x)\right]\left(\mathcal{V}'(l)-p(l)\right)$ point-wise at each quantity $l$. 
The solution of the point-wise maximization problem for the optimal marginal price $p(\cdot)$ is given by,
\begin{eqnarray}
\hspace{-20pt}&p(l)\hspace{-2pt}=&\hspace{-8pt}arg\max_{\hat{p}}\left\{P\left[x\in\chi|
{\hat{p}}\geq c(q,x)\right]\left(\mathcal{V}'(q)-\hat{p}\right)\right\}\label{margp-gen}\hspace{-3pt}.
\end{eqnarray}
Using (\ref{pforlmin}), along with the the fact that the worst type has the highest expected marginal cost, we can simplify (\ref{margp-gen}) for $l\leq q^*(\underline{x})$,
\begin{equation}
p(l)= c(l,\underline{x}),\;\;\text{for}\;l\leq q^*(\underline{x}).
\end{equation}
That is,  for $l\leq q^*(\underline{x})$, the minimum marginal price $p(l)$ that ensures all the seller's type are willing to produce more than $q^*(\underline{x})$ is equal to the marginal expected cost for the seller's worst type $c(l,\underline{x})$.
Therefore, the solution to the unconstrained version of problem (P4) satisfies condition (\ref{p5-con}) of problem (P4), and therefore, (\ref{margp-gen}) (which is the same as (3)) is also the optimal solution of problem (P4).

From Corollary 2 and (\ref{margp-gen}), the optimal payment function (nonlinear pricing) is given by,
\begin{eqnarray}
t(q)=\int_{0}^q{p(l)dl}+C(0,\underline{x})
\end{eqnarray}
which is the same as (4).
From (\ref{agent-q}) we determine the optimal energy procurement function,
\begin{eqnarray}
q(x)=\text{arg}\max_{l}\mathbb{E}\left\{t\left(l\right)-C(l,w,x)\right\} 
\end{eqnarray} 
which us the as (5).
The specification of $t(\cdot)$ and $q(\cdot)$ completes the proof of theorem 1 and the solution to problem (P1).

\end{proof}

\end{document}